\newtheorem{theorem}{Theorem}[section]
\newtheorem{lemma}[theorem]{Lemma}
\newtheorem{proposition}[theorem]{Proposition}
\newtheorem{definition}[theorem]{Definition}
\newtheorem{remark}[theorem]{Remark}
\newtheorem{example}[theorem]{Example}
\newcommand{\R}{\mathbb{R}}
\newcommand{\F}{\mathcal{F}}
\newcommand{\D}{\mathcal{D}}
\newcommand{\Id}{\mathrm{id}}
\renewcommand{\d}{\mathrm{d}}
\renewcommand{\epsilon}{\varepsilon}
\newcommand*{\mailto}[1]{\href{mailto:#1}{\nolinkurl{#1}}}
\numberwithin{equation}{section}
\begin{document}
\title[Lipschitz metric for the Hunter--Saxton system]{A Lipschitz metric for conservative solutions of the two-component Hunter--Saxton system}

\allowdisplaybreaks

\author[A. Nordli]{Anders Nordli}
\address{Department of Mathematical Sciences\\ Norwegian University of Science and Technology\\ NO-7491 Trondheim\\ Norway}
\email{\mailto{andenord@math.ntnu.no}}

\date{\today}

\subjclass[2010]{Primary: 35Q53, 35B35; Secondary: 35Q20}
\keywords{Two-component Hunter--Saxton system, conservative solutions, Lip\-schitz metric}

\begin{abstract}
We establish the existence of conservative solutions of the initial value problem of the two-component Hunter--Saxton system on the line. Furthermore we investigate the stability of these solutions by constructing a Lipschitz metric.
\end{abstract}

\maketitle

\section{Introduction}\label{section intro}
The two-component Hunter--Saxton system, given by
\begin{subequations}
\label{hunter-saxton-system}
\begin{align}
u_t(x,t)+u(x,t)u_x(x,t) &= \frac 14\bigg(\int_{-\infty}^x(u_x(z,t)^2+\rho(z,t)^2)\:\d z\nonumber\\ &\quad -\int_x^{\infty}(u_x(z,t)^2+\rho(z,t)^2)\:\d z\bigg),\\
\rho_t(x,t) +(u(x,t)\rho(x,t))_x &= 0,
\end{align}
\end{subequations}
was derived by Pavlov as a model of non-dissipative dark matter \cite{P}. It can also be viewed as a high frequency limit of the two-component Camassa--Holm system describing water waves \cite{W10}. The system \eqref{hunter-saxton-system} is a generalization of the Hunter--Saxton equation
\begin{equation}
\label{hunter-saxton}
(u_t+uu_x)_x = \frac 12 u_x^2,
\end{equation}
introduced by Hunter and Saxton as a model of the director field of a nematic liquid crystal \cite{HS}. 

Here we prove global existence of conservative weak solutions of the initial value problem for \eqref{hunter-saxton-system} on the line, and construct a metric that renders the flow Lipschitz continuous. Previously Wunsch has proven existence of solutions of \eqref{hunter-saxton-system} in the periodic setting \cite{W,W10}, and global existence of dissipative solutions on the real line \cite{W10}. Dissipative and conservative solutions are two distinct ways to extend the solution past the time where classical solutions break down. Before we discuss the difference between conservative and dissipative solutions we will look at how solutions break down.

A common feature for weak solutions of \eqref{hunter-saxton-system} and the Hunter--Saxton equation \cite{HS} is that weak solutions may experience wave breaking, which means that $u_x$ tends pointwise to $-\infty$ in finite time while $u$ stays continuous. The phenomenon is illustrated in the following example.
\begin{example}
\label{example solution}
Let $t\in [0,2)$ and let the functions $u$ and $\rho$ be defined by
\begin{align*}
u(x,t) &=
	\begin{cases}
	-\frac 12t+1,& x\leq -\frac 14t^2+t-1,\\
	-\frac{1}{-\frac 12t+1}x,& -\frac 14t^2+t-1\leq x \leq 0,\\
	\frac{t}{\frac 12t^2+2}x, &	0\leq x \leq \frac 14 t^2+1,\\
	\frac 12 t, & \frac 14t^2+1 \leq x,
	\end{cases}\\
\rho(x,t) &=
	\begin{cases}
	0,& x\leq 0,\\
	\frac{1}{\frac 14t^2+1}, &0< x \leq \frac 14 t^2+1,\\
	0, & \frac 14 t^2+1<x.
	\end{cases}
\end{align*}
Then $(u,\rho)$ is a weak solution of \eqref{hunter-saxton-system} for $t\in[0,2)$. Note that $u_{x}(0,t)\rightarrow -\infty$ as $t\rightarrow 2^-$, which in particular means that wave breaking occurs. We can define the energy of the system at time $t$ to be given by
\begin{equation}
\int_{\R} \big(u_x^2(x,t)+\rho^2(x,t)\big)\:\d x = 2,
\end{equation}
which is constant in time, even up to the point $t=2$. The energy contained in the interval $-\frac 14 t^2 + t -1\leq x\leq 0$, given by $\int_{-\frac 14 t^2 + t -1}^0 (u_x^2+\rho^2)\:\d x = 1$, is also conserved. Thus a finite amount of energy is being concentrated in a single point as $t\rightarrow 2^-$.
\end{example}
\begin{figure}
\includegraphics[width=8cm]{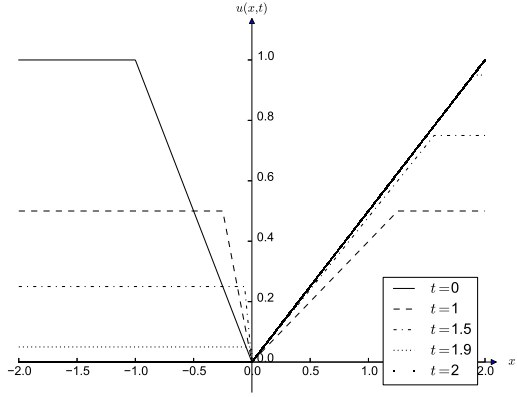}
\caption{A plot of $u$ in Example \ref{example solution} for $t=0,1,1.5,1.9,2$.}
\label{fig example intro}
\end{figure}
As seen in Example \ref{example solution} a part of the energy $\int_\R(u_x^2+\rho^2)\:\d x$ is focused at a single point at wave breaking. This illustrates that the energy density is not absolutely continuous, but a finite Radon measure in general. Nevertheless, the total energy remains constant in time as $t\rightarrow 2^-$. Hence $u_x,\rho$ stay in $L^2(\R)$ even if $u_x$ tends to minus infinity. This means that the energy can be described by the cumulative distribution function of a finite Radon measure. One can extend local solutions to global solutions by manipulating the concentrated energy at wave breaking. There are at least two ways to extend the solution to a global one past wave breaking. On the one hand one could ignore the part of the energy that concentrates on a set of measure zero in the continuation, which yields dissipative solutions.  On the other hand one could continue by letting the concentrated energy back into the system, which would give conservative solutions. In practice that would amount to defining $u$ and $\rho$ by the formulas in Example \ref{example solution} even for $t>2$. Thus it is essential to include the energy variable in our sets of variables, when constructing global conservative solutions.

We are going to solve the system \eqref{hunter-saxton-system} by the generalized method of characteristics. The approach is similar to the one by Dafermos in \cite{D}, where uniqueness of dissipative weak solutions of the Hunter--Saxton equation has been established. To that end we map our Eulerian coordinates $(u,\rho,(u_x^2+\rho^2)\:\d x)$ to the Lagrangian variables $(y,U,H,r)$ defined as follows. Let $y(\xi,t)$ be defined by $y_t(\xi,t) = u\big(y(\xi,t\big),t)$, $U(\xi,t) = u\big(y(\xi,t),t\big)$, and define $H(\xi,t) = \int_{-\infty}^{y(\xi,t)}\left(u_x(x,t)^2+\rho(x,t)^2\right)\d x$ as the energy to the left of $y(\xi,t)$. Note that $(u_x^2+\rho^2)\d x$ could be a singular measure. We introduce $r(\xi,t) = \rho\big(y(\xi,t),t\big)y_{\xi}(\xi,t)$. In Section \ref{section mappings} we will rigorously define the proper space for the variables $(y,U,H,r)$, and establish mappings between that space and the space for conservative solutions of \eqref{hunter-saxton-system}. In the above variables the Hunter--Saxton system reduces to
\begin{subequations}
\label{lagrangian system}
\begin{align}
y_t &= U,\\
U_t &= \frac 12 H-\frac 14H_{\infty},\\
H_t &= 0,\\
r_t &= 0,
\end{align}
\end{subequations}
where $H_{\infty} = \lim_{\xi\rightarrow\infty}H(\xi,0)$. The time evolution of $H$ follows from the conservation law
\begin{equation}
\label{conservation law}
(u_x^2+\rho^2)_t +(u(u_x^2+\rho^2))_x = 0,
\end{equation}
see for instance \cite{BHR,HZ3} for the similar conservation law for the Hunter--Saxton equation. In Section \ref{section existence} we solve \eqref{lagrangian system}, and together with the mappings from Section \ref{section mappings} prove that the we can construct global conservative solutions of \eqref{hunter-saxton-system}.

In Section \ref{section metric} we construct a Lipschitz metric. The idea is to construct the metric in the transformed variables $(y,U,H,r)$ to avoid having to deal with the measure. The Eulerian variables are one-to-one to equivalence classes of Lagrangian variables. We construct a functional $J$ that respects the equivalence structure such that it can be used as a building block of the metric. The approach here is thus more similar to the one employed for the Camassa--Holm equation \cite{LipCHline,LipCH} than the methods previously used to construct Lipschitz metrics for the scalar Hunter--Saxton equation \cite{BC,BHR}. 

\section{Mappings between Eulerian and Lagrangian coordinates}\label{section mappings}
In this section we define the sets of Lagrangian and Eulerian coordinates, and investigate the mappings between them. We introduce first an important ambient vector space $B$.
\begin{definition}
Let $E_1$ be the vector space defined by
\begin{equation}
E_1 = \{ f\in L^{\infty}(\R) \mid f'\in L^2(\R) \text{ and }\lim_{x\rightarrow -\infty}f(x) = 0\},
\end{equation}
equipped with the norm $\|f\|_{E_1} = \|f\|_{\infty}+\|f'\|_{2}$, and $E_2$ be defined by
\begin{equation}
E_2 = \{ f\in L^{\infty}(\R) \mid f'\in L^2(\R)\},
\end{equation}
equipped with the norm $\|f\|_{E_2} = \|f\|_{\infty}+\|f'\|_{2}$. Then define the normed space $B$ by $B = E_2\times E_2\times E_1\times L^2(\R)$, with the norm
\begin{equation}
\|(f_1,f_2,f_3,f_4)\|_B = \|f_1\|_{E_2} + \|f_2\|_{E_2} + \|f_3\|_{E_1} + \|f_4\|_2.
\end{equation}
\end{definition}
The natural space to look for solutions in Eulerian variables is the following.
\begin{definition}
\label{def D}
The space $\D$ consists of all triples $(u,\rho,\mu)$ such that
\begin{align*}
(i)&\;\;\; u\in E_2,\\
(ii)&\;\;\; \rho\in L^2(\R),\\
(iii)&\;\;\; \mu \in \mathcal{M}^+(\R),\\
(iv)&\;\;\; \mu_{ac} = (u_x^2+\rho^2)\:\d x,
\end{align*}
where $\mathcal{M}^+(\R)$ denotes the set of positive, finite Radon measures on $\R$.
\end{definition}
We are now ready to define the Lagrangian coordinates as a subset of $B$. The definition is similar to  \cite[Definition 2.2] {BHR}.
\begin{definition}
\label{def_F}
The set $\mathcal{F}$ consists of all quadruples $X=(y,U,H,r)$ such that $(y-\Id,U,H,r) \in B$, and there exists a number $c>0$ such that
\begin{align*}
(i)&\;\;\; y-\Id,U,H \in W^{1,\infty}(\R), r \in L^{\infty}(\R),\\
(ii)&\;\;\; y_{\xi}\geq 0, H_{\xi}\geq 0, H_{\xi}+y_{\xi} \geq c > 0 \text{ a.e.},\\
(iii)&\;\;\; y_{\xi}H_{\xi} = U_{\xi}^2 + r^2\;\;\;a.e.
\end{align*}
We define the subset $\F_0$ of $\F$ by
\begin{equation}
\F_0 = \{X\in\F\mid y+H = \Id\}.
\end{equation}
\end{definition}
We will use the notation $H_{\infty}=\lim_{\xi\rightarrow\infty}H(\xi) = \|H\|_{\infty}$. To be able to work with the space $\F$ we need a mapping from Eulerian to Lagrangian variables, and vice versa. Before the mappings are introduced, we will state a useful lemma on monotone Lipschitz continuous functions.
\begin{lemma}[{\cite[Lemma 3.9]{HR07}}]
\label{lipschitz lemma}
Let $f:\R\rightarrow\R$ be an increasing Lipschitz continuous function. Then for any set $B$ with $\mathrm{m}(B) = 0$, we have that $f_{\xi} = 0$ almost everywhere in $f^{-1}(B)$.
\end{lemma}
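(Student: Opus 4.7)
The plan is to prove this via the absolute continuity of Lipschitz functions together with an outer-regularity approximation of the null set $B$. Specifically, since $f$ is Lipschitz it is absolutely continuous, so $f_\xi$ exists almost everywhere, lies in $L^\infty(\R)$, satisfies $f_\xi \geq 0$ a.e.\ (because $f$ is increasing), and obeys the fundamental theorem of calculus $f(b)-f(a) = \int_a^b f_\xi(x)\,\d x$. Given these tools, the strategy is to show $\int_{f^{-1}(B)} f_\xi(x)\,\d x = 0$; combined with $f_\xi \geq 0$ this forces $f_\xi = 0$ a.e.\ on $f^{-1}(B)$.

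To bound the integral, I would fix $\epsilon > 0$ and invoke outer regularity of Lebesgue measure to pick an open set $V \supset B$ with $\mathrm{m}(V) < \epsilon$. Decompose $V$ as a countable disjoint union of open intervals $V = \bigsqcup_i (a_i,b_i)$. By continuity, $f^{-1}((a_i,b_i))$ is open, hence itself a countable disjoint union of open intervals $\bigsqcup_j (c_{ij},d_{ij})$. Since $f^{-1}(B) \subseteq f^{-1}(V)$, this gives the cover I will integrate over.

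The key estimate comes from combining monotonicity with the FTC. On each $(c_{ij},d_{ij})$ the values $f(c_{ij})$ and $f(d_{ij})$ lie in $[a_i,b_i]$ by continuity, and because $f$ is increasing and the intervals $(c_{ij},d_{ij})$ are pairwise disjoint, the sub-intervals $[f(c_{ij}),f(d_{ij})]$ are pairwise non-overlapping inside $[a_i,b_i]$. Therefore $\sum_j \bigl(f(d_{ij})-f(c_{ij})\bigr) \leq b_i - a_i$, and summing the FTC identity gives
\begin{equation*}
\int_{f^{-1}(B)} f_\xi\,\d x \;\leq\; \int_{f^{-1}(V)} f_\xi\,\d x \;=\; \sum_{i,j}\bigl(f(d_{ij}) - f(c_{ij})\bigr) \;\leq\; \sum_i (b_i-a_i) \;=\; \mathrm{m}(V) \;<\; \epsilon.
\end{equation*}
Letting $\epsilon \to 0$ yields $\int_{f^{-1}(B)} f_\xi\,\d x = 0$, and the conclusion follows.

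The only real obstacle is the monotonicity bookkeeping in the last step — one must verify carefully that disjoint intervals in the domain, under an increasing (not necessarily strictly increasing) continuous function, produce images whose lengths sum to at most the length of the enclosing target interval $[a_i,b_i]$. Aside from this elementary observation, the argument is a standard application of the absolute continuity of Lipschitz functions combined with outer regularity.
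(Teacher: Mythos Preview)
The paper does not supply its own proof of this lemma; it merely cites \cite[Lemma~3.9]{HR07}. So there is no in-paper argument to compare against, and your proof should be assessed on its own merits.

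Your argument is correct. One simplification: since $f$ is increasing (non-decreasing) and continuous, the preimage $f^{-1}\bigl((a_i,b_i)\bigr)$ is already an interval --- if $x_1<x<x_2$ with $f(x_1),f(x_2)\in(a_i,b_i)$, then $a_i<f(x_1)\le f(x)\le f(x_2)<b_i$. Hence there is at most one component $(c_i,d_i)$ for each $i$, the inner index $j$ is unnecessary, and the ``monotonicity bookkeeping'' you flag as the only obstacle evaporates: one simply has
\[
\int_{f^{-1}((a_i,b_i))} f_\xi\,\d x \;=\; \lim_{x\uparrow d_i} f(x)-\lim_{x\downarrow c_i} f(x)\;\le\; b_i-a_i,
\]
and the rest follows as you wrote. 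A minor technical point you glossed over is that $f^{-1}(B)$ need not be Lebesgue measurable when $B$ is merely a Lebesgue null set; this is harmless, since one may first replace $B$ by a $G_\delta$ null set containing it, whose preimage under the continuous map $f$ is Borel.
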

The map $L$ in the following definition maps $\D$ into $\F$, and thus represents a way to pass from Eulerian to Lagrangian coordinates in a rigorous manner. The definition is similar to \cite[Theorem 4.9]{GHR}.
\begin{definition}
\label{definition L}
Let the mapping $L:\D\rightarrow\F_0$ be defined by $L(u,\rho,\mu) = (y,U,H,r)$ where
\begin{subequations}
\begin{align}
\label{definition of y from L} 
y(\xi) &= \sup\{x|\mu((-\infty,x))+x < \xi\},\\
H(\xi) &= \xi - y(\xi),\\
U(\xi) &= u \circ y(\xi),\\
r(\xi) &= (\rho\circ y(\xi))y_{\xi}(\xi).
\end{align}
\end{subequations}
\end{definition}
\begin{proposition}
The mapping $L$ from Definition \ref{definition L} is well defined.
\end{proposition}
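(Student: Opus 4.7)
The strategy is to view the definition of $y$ through the auxiliary function $F(x)=\mu((-\infty,x))+x$, which is non-decreasing and right-continuous, with jumps exactly at the atoms of $\mu$ and with the property $F(x+h)-F(x)\ge h$. The formula $y(\xi)=\sup\{x\mid F(x)<\xi\}$ then presents $y$ as a generalized left-continuous inverse of $F$. From $F(x+h)-F(x)\ge h$ I would immediately deduce that $y$ is Lipschitz with constant at most $1$ and non-decreasing, so $0\le y_\xi\le 1$ a.e. With $H(\xi):=\xi-y(\xi)$, the condition $y+H=\Id$ holds by construction (giving $X\in\F_0$), and $H_\xi=1-y_\xi\in[0,1]$ a.e. together with $y_\xi+H_\xi=1$ yields condition (ii) of Definition \ref{def_F} with $c=1$.

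Next I would verify the space memberships. Because $\mu$ is finite, $\mu((-\infty,x))\to 0$ as $x\to-\infty$ and is bounded above by $\mu(\R)$, so $0\le H(\xi)\le\mu(\R)$ and $H(\xi)\to 0$ as $\xi\to-\infty$, giving $H\in L^\infty$ with the correct limit. For the $L^2$ bound on derivatives I use the elementary inequality $(1-y_\xi)^2\le 1-y_\xi$ together with $\int(1-y_\xi)\,\d\xi=H_\infty\le\mu(\R)$, which yields both $H_\xi\in L^2$ and $(y-\Id)'=y_\xi-1\in L^2$; this places $H\in E_1$ and $y-\Id\in E_2$. For $U=u\circ y$ and $r=(\rho\circ y)y_\xi$, boundedness of $u$ gives $U\in L^\infty$, and the Lipschitz change-of-variables $\int(f\circ y)y_\xi\,\d\xi=\int_{y(\R)}f\,\d x$ combined with $y_\xi\le 1$ produces
\begin{equation*}
\int U_\xi^2\,\d\xi\le\int(u_x\circ y)^2 y_\xi\,\d\xi\le\int u_x^2\,\d x,\qquad \int r^2\,\d\xi\le\int\rho^2\,\d x,
\end{equation*}
so $U\in E_2$ and $r\in L^2$. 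Measurability of $U_\xi$ and $r$ follows from the chain rule for Lipschitz compositions.

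The delicate step is identity (iii), $y_\xi H_\xi=U_\xi^2+r^2$ a.e. I would decompose $\mu=\mu_{ac}+\mu_s$ with $\mu_{ac}=(u_x^2+\rho^2)\d x$ by hypothesis~(iv), and let $N$ be a Borel set of Lebesgue measure zero carrying $\mu_s$. On the set $\{y_\xi=0\}$ both sides vanish since $U_\xi=(u_x\circ y)y_\xi=0$ and $r=(\rho\circ y)y_\xi=0$. On $\{y_\xi>0\}$, Lemma \ref{lipschitz lemma} applied to $y$ and $N$ tells us that, up to a null set, $y(\xi)\notin N$, so the singular part contributes nothing when we differentiate the identity $\xi=F(y(\xi))=y(\xi)+\mu((-\infty,y(\xi)))$; this gives $1=y_\xi+((u_x^2+\rho^2)\circ y)\,y_\xi$, i.e.\ $H_\xi=((u_x^2+\rho^2)\circ y)\,y_\xi$, and multiplying by $y_\xi$ reproduces $U_\xi^2+r^2$. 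The main obstacle lies precisely here: justifying that the almost-everywhere derivative of $\xi\mapsto\mu((-\infty,y(\xi)))$ coincides with $((u_x^2+\rho^2)\circ y)\,y_\xi$ on the good set. Lemma \ref{lipschitz lemma} handles the singular part, while the absolutely continuous part is handled by a standard Lipschitz change-of-variables argument, completing the verification that $L(u,\rho,\mu)\in\F_0$.
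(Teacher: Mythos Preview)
Your argument is correct and follows precisely the standard route of \cite[Theorem~3.8]{HR07} and \cite[Theorem~4.9]{GHR}, which is exactly what the paper invokes; the paper itself defers all of this to those references and only adds the one-line observation that $U=u\circ y\in L^\infty(\R)$ because $u\in L^\infty(\R)$, which is the sole point where the present spaces differ from the $L^2$-based setting in the cited works. One small slip: $F(x)=\mu((-\infty,x))+x$ is \emph{left}-continuous, not right-continuous, since $x\mapsto\mu((-\infty,x))$ is; this has no effect on the rest of your argument because you only use the monotonicity and the increment bound $F(x+h)-F(x)\ge h$, and $y$ turns out to be Lipschitz (hence continuous) anyway.
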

\begin{proof}
The proof follows closely those of \cite[Theorem 3.8]{HR07} and \cite[Theorem 4.9]{GHR}, but here the spaces $\D$ and $\F$ are different. The difference is that in our case we have $u,U\in L^{\infty}(\R)$, while \cite[Theorem 4.9]{GHR} uses $u,U\in L^2(\R)$. To prove that $L$ is well defined, let $(u,\rho,\mu)\in\D $ and define $X=(y,U,H,r) = L(u,\rho,\mu)$. We only prove that $U$ is in $L^{\infty}(\R)$. Since $u\in L^{\infty}(\R)$, and $y$ and $U$ are well defined functions, it holds that $U=u\circ y$ is bounded by $\|u\|_{\infty}$. 
\end{proof}
We compute an example to illustrate how the mapping $L$ works.
\begin{example}
\label{basic example}
Let $(u,\rho,\mu)\in\D$ be defined by
\begin{align}
u(x) &= \begin{cases} 1,& x<-1,\\ -x,&-1\leq x < 0,\\ 0, & 0\leq x,\end{cases}\\
\rho(x) &= \mathbf{1}_{[0,1]}(x),\\
\mu &= \mu_{ac}+\frac 12\delta_0.
\end{align}
The distribution function $F$ of the measure $\mu$ is given by
\begin{equation}
\label{distribution function}
F(x) = \mu\left((-\infty,x]\right) = 
\begin{cases}
0, & x\leq -1,\\
x+1, & -1\leq x < 0,\\
x+ \frac 32, & 0\leq x\leq 1,\\
\frac 52, & 1\leq x.
\end{cases}
\end{equation}
Then $(y,U,H,r)=L(u,\rho,\mu)$ is given by
\begin{subequations}
\label{basic example equations}
\begin{align}
y(\xi) &=
	\begin{cases}
	\xi, & \xi\leq -1,\\
	\frac 12(\xi-1),& -1\leq \xi\leq 1,\\
	0,& 1\leq\xi\leq \frac 32,\\
	\frac 12(\xi-\frac 32), & \frac 32\leq \xi \leq \frac 72,\\
	\xi-\frac 52,& \frac 72 \leq \xi,
	\end{cases}\\
U(\xi) &=
	\begin{cases}
	1, & \xi\leq -1\\
	-\frac 12(\xi-1), &-1\leq \xi\leq 1,\\
	0, & 1\leq \xi,
	\end{cases}\\
H(\xi) &= 
	\begin{cases}
	0, & \xi\leq -1,\\
	\frac 12(\xi+1),& -1\leq \xi\leq 1,\\
	\xi,& 1\leq\xi\leq \frac 32,\\
	\frac 12(\xi+\frac 32), & \frac 32\leq \xi \leq \frac 72,\\
	\frac 52,& \frac 72 \leq \xi,
	\end{cases}\\
r(\xi) &=\frac 12\mathbf{1}_{[\frac 32,\frac 72]}.
\end{align}
\end{subequations}
See Figure \ref{figure example} for a plot of the functions $F$, $y$, and $H$. Note that the Dirac delta corresponds to a flat interval in $y$, with the length of the interval equal to the strength of the delta.
\end{example}
\begin{figure}
\centering
\label{figure example}
\includegraphics[width=8cm]{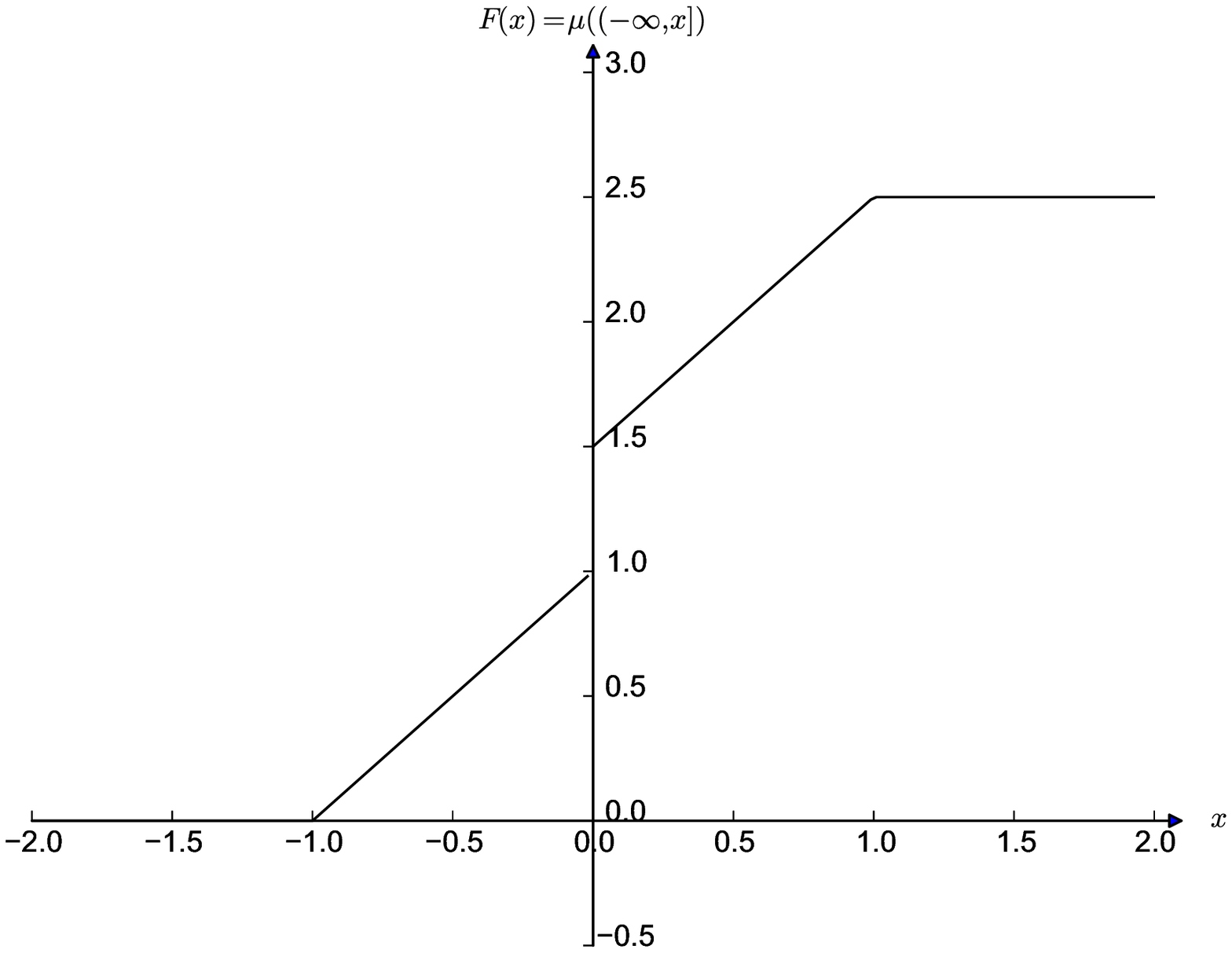}\\
\includegraphics[width=6cm]{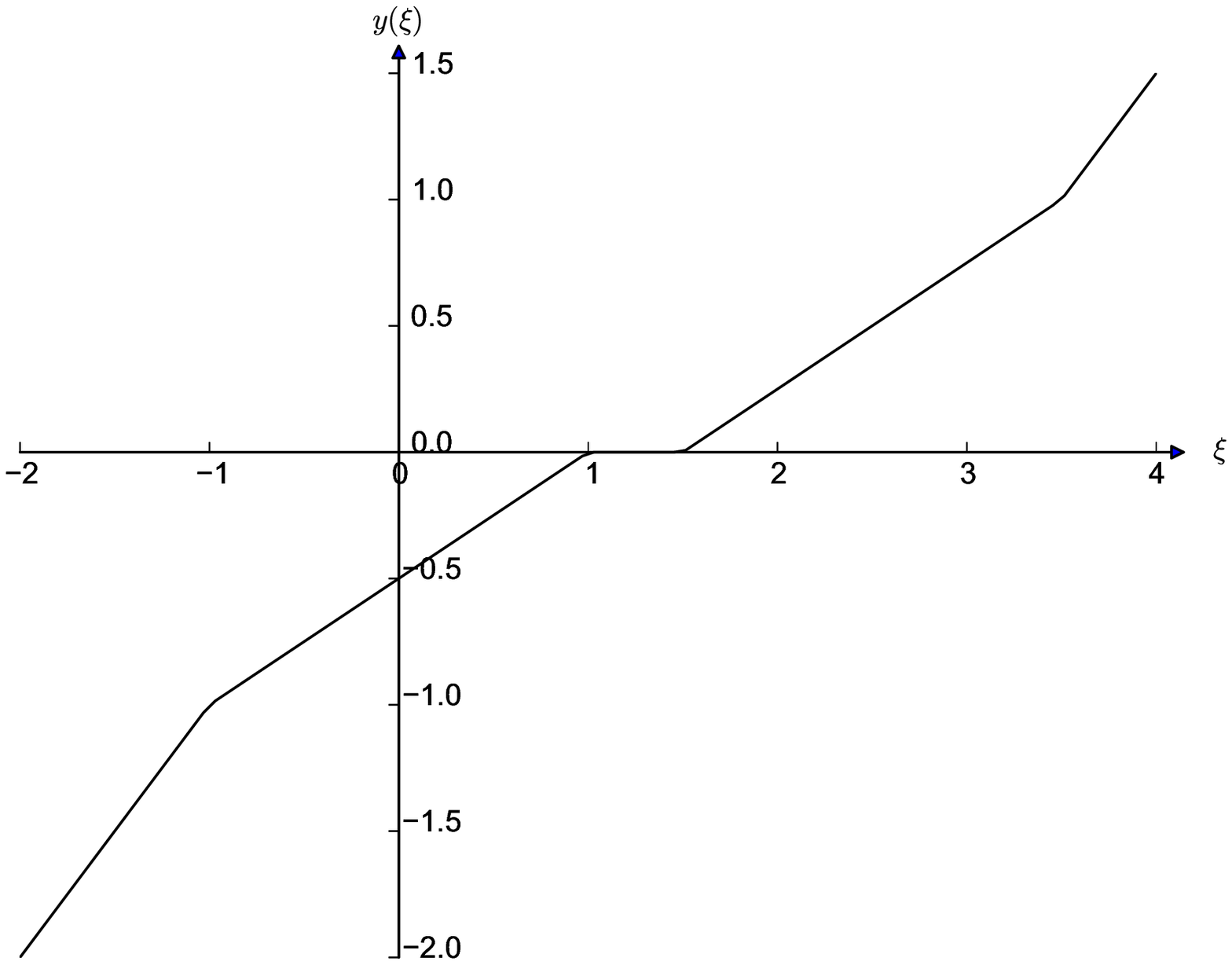}
\includegraphics[width=6cm]{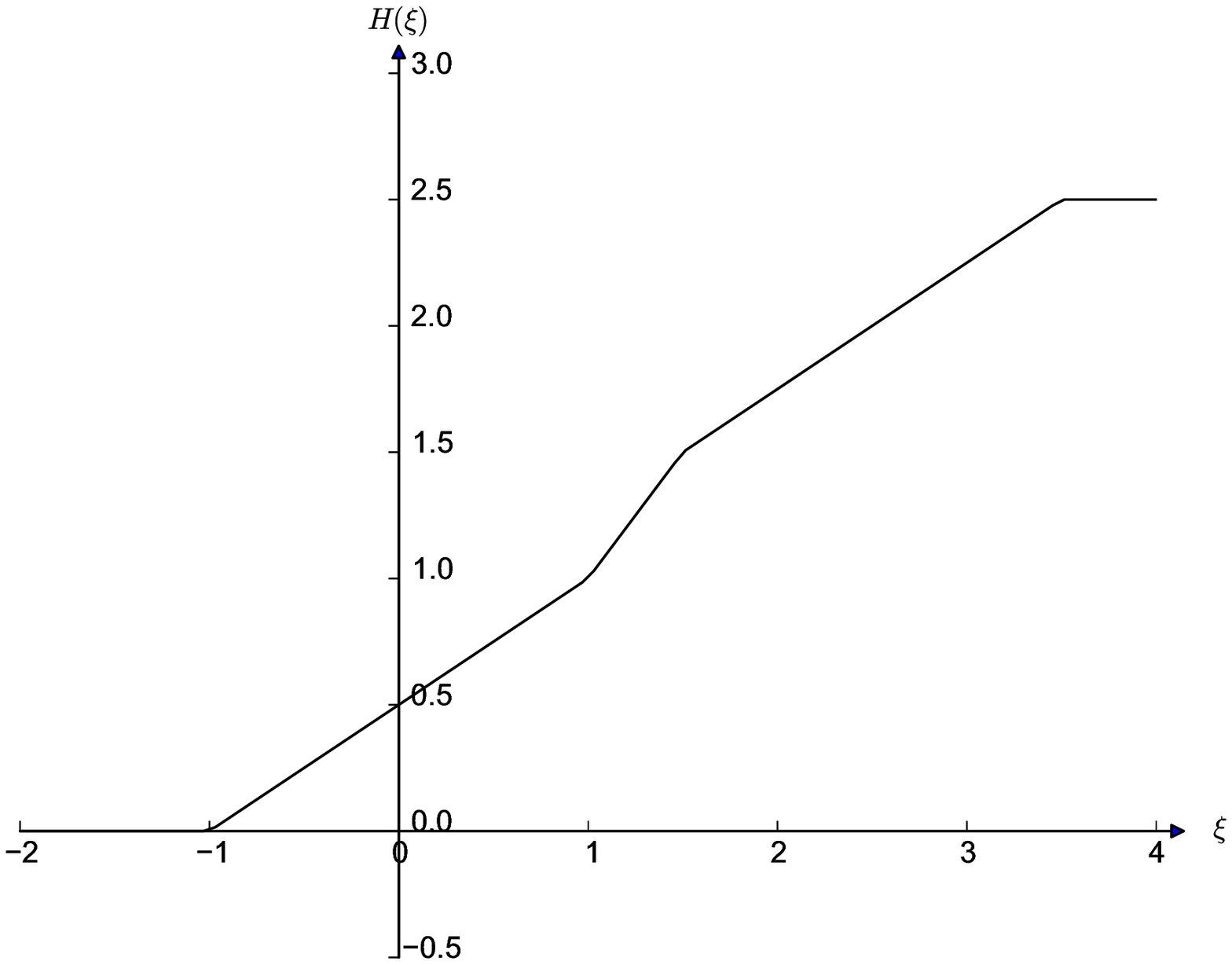}
\caption{A plot of the functions $F$, $y$, and $H$ in Example \ref{basic example}. Note that the jump in $F$ corresponds to the flat interval in $y$.}
\end{figure}
We also need a mapping $M:\F\rightarrow\D$ that takes us back from Lagrangian to Eulerian variables. The definition is similar to \cite[Theorem 4.10]{GHR}.
\begin{definition}
\label{definition M}
Let the mapping $M:\F\rightarrow\D$ be defined by $M(y,U,H,r) = (u,\rho,\mu)$ where
\begin{subequations}
\begin{align}
u(x) &= U\big(y(\xi)\big),\qquad \text{for some } \xi \text{ such that }x = y(\xi),\\
\rho\:\d x &= y_{\#}(r\:\d\xi),\\
\mu &= y_{\#}(H_{\xi}\:\d\xi).
\end{align}
\end{subequations}
The notation $f_{\#}(\mu)$ denotes the push-forward of the measure $\mu$ by the measurable function $f$, i.e.\ $f_{\#}(\mu)(A) = \int_{f^{-1}(A)}\:\d\mu$ for all measurable sets $A$.
\end{definition}
Since there are instances where $y_{\xi}=0$ on some interval, see Example \ref{basic example}, one might encounter difficulties when trying to invert $y$. Also it is not clear that the range of $M$ is $\D$. It is therefore necessary to prove that the mapping is well defined.
\begin{proposition}
\label{proposition M}
The mapping $M$ in Definition \ref{definition M} is well defined.
\end{proposition}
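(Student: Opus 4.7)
The plan is to verify conditions $(i)$--$(iv)$ of Definition~\ref{def D} for the triple $(u,\rho,\mu) = M(X)$. The technical difficulty is that $y$ is non-decreasing but possibly not strictly so, so I would introduce the workhorse set $A = \{\xi \in \R : y_\xi(\xi) > 0\}$ and exploit (iii) to show that $r$ and $U_\xi$ vanish off $A$.

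First I would verify that $u$ is well defined pointwise. Since $y-\Id$ is bounded and $y$ is Lipschitz, $y$ is continuous and surjective, so every $x$ has at least one preimage. If $y(\xi_1) = y(\xi_2)$ for $\xi_1 < \xi_2$, monotonicity forces $y$ constant on $[\xi_1,\xi_2]$, hence $y_\xi = 0$ a.e.\ on this interval. The compatibility (iii) then gives $U_\xi = r = 0$ a.e.\ on $[\xi_1,\xi_2]$, and absolute continuity of $U$ yields $U(\xi_1) = U(\xi_2)$. Thus $u$ is unambiguously defined and $\|u\|_\infty \leq \|U\|_\infty$. The same argument shows $y|_A$ is injective up to a null set, and $r = U_\xi = 0$ a.e.\ on $\R \setminus A$.

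The heart of the proof is the change of variables
\[
\int_\R \phi(y(\xi))\, g(\xi)\, y_\xi(\xi)\, \d\xi = \int_\R \phi(x)\, \bar g(x)\, \d x,
\]
valid for Borel $\phi\geq 0$ and measurable $g$ such that $\bar g\circ y = g$ a.e.\ on $A$. Applied with $g = r/y_\xi$ and with $g = U_\xi/y_\xi$, this yields $\rho\circ y = r/y_\xi$ and $u_x\circ y = U_\xi/y_\xi$ a.e.\ on $A$, with both densities vanishing off $y(A)$; the compatibility bound $U_\xi^2 + r^2 \leq y_\xi H_\xi$ together with $\int_\R H_\xi\, \d\xi = H_\infty$ then give $\|u_x\|_2^2 + \|\rho\|_2^2 \leq H_\infty$, so $u \in E_2$ and $\rho \in L^2(\R)$. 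Positivity of $\mu$ is immediate, and $\mu(\R) = H_\infty < \infty$. For the Lebesgue decomposition, split $\mu = y_\#(H_\xi \mathbf{1}_A \d\xi) + y_\#(H_\xi \mathbf{1}_{\R\setminus A} \d\xi)$. By absolute continuity of $y$, the image $y(\R \setminus A)$ satisfies $|y(\R\setminus A)| \leq \int_{\R\setminus A} y_\xi\, \d\xi = 0$, so the second summand is singular. The same change of variables gives the first summand the density $H_\xi/y_\xi = (U_\xi^2 + r^2)/y_\xi^2 = u_x^2 + \rho^2$, matching (iv).

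The main obstacle is the bookkeeping for the non-injective $y$: one must argue simultaneously that the parts of $r$ and $U_\xi$ supported on $\{y_\xi = 0\}$ vanish (so the absolutely continuous parts of $\rho\, \d x$ and $u_x^2\, \d x$ arise entirely from $A$), and that $y(\{y_\xi=0\})$ is a Lebesgue null set (so its contribution to $\mu$ is purely singular). The compatibility (iii), together with Lemma~\ref{lipschitz lemma} and the Luzin N-property of the Lipschitz function $y$, are the key tools that make this go through.
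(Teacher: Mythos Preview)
Your proposal is correct and follows essentially the same approach as the paper: the paper's proof simply defers to \cite[Theorem 3.11]{HR07} and \cite[Theorem 4.10]{GHR} for everything except the $L^\infty$ bound on $u$, and the argument you sketch (well-definedness of $u$ via constancy of $U$ on flat intervals of $y$, the change-of-variables on $A=\{y_\xi>0\}$, and the splitting of $\mu$ into absolutely continuous and singular parts via $A$ and its complement) is precisely the content of those cited proofs. The only new ingredient the paper supplies is the one-line observation $\|u\|_\infty\leq\|U\|_\infty$, which you also include.
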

\begin{proof}
The proof follows closely those of \cite[Theorem 3.11]{HR07} and \cite[Theorem 4.10]{GHR}, but here the spaces $\D$ and $\F$ are different. The differences are that here $u,U\in L^{\infty}(\R)$, while \cite[Theorem 3.11]{HR07} and \cite[Theorem 4.10]{GHR} use $u,U\in L^2(\R)$. Let $X=(y,U,H,r)\in\F $ and $(u,\rho,\mu) = M(X)$. We prove that $u\in L^{\infty}(\R)$ since everything else is covered in the proof of \cite[Theorem 4.10]{GHR}. Since $u$ is a well defined function we must have that $u$ is bounded by $\|U\|_{\infty}$.
\end{proof}
\begin{example}
Let $X\in\F_0$ be given by $L(u,\rho,\mu)$ in Example \ref{basic example}. Then $M(X) = (u,\rho,\mu)$. To see that this is the case we compute the distribution function of the measure we get by applying $M$ to $X$. Let $x\in\R$ and $M(X)=(\bar{u},\bar{\rho},\bar{\mu})$, then
\begin{align*}
\bar{\mu}\left((-\infty,x]\right) &= \int_{y^{-1}\left((-\infty,x]\right)}H_{\xi}(\xi)\:\d\xi \\
	&=
		\begin{cases}
		0, & x \leq -1,\\
		\int_{-1}^{2x+1}\frac 12\:\d\xi = x+1, & -1\leq x < 0,\\
		\int_{\frac 32}^{2x+\frac 32}\frac 12\:\d\xi+\frac 12+1 = x+\frac 32, & 0\leq x\leq 1,\\
		\frac 52, & 1\leq x,
		\end{cases}
\end{align*}
which is equal to the distribution function $F$ of $\mu$ from Example \ref{basic example} plotted in Figure \ref{figure example}.
\end{example}
In the Lagrangian formulation there are four unknowns, while in the Eulerian there are only three. Hence it is not surprising that there is some redundancy in the Lagrangian formulation. Indeed there are equivalence classes in $\F$ such that all elements in an equivalence class map to the same element in $\D$. The equivalence classes are determined by a group of relabeling functions, $G$, and a relabeling operator $\bullet$.
\begin{definition}
\label{definition_G}
We define the group $G$ and the group action $\bullet$ of $G$ on $\F$ as follows.
\begin{itemize}
\item[(i)] Define $G$ as the group of homeomorphisms $f:\R\rightarrow\R$ such that both $f - \Id \in W^{1,\infty}(\R)$, $f^{-1}-\Id \in W^{1,\infty}(\R)$, and $f_{\xi}-1\in L^2(\R)$.
\item[(ii)] Define a group action $\bullet:\F\times G \rightarrow \F$ by $(X,f)\mapsto (y\circ f, U\circ f, H\circ f, (r\circ f) \cdot f')=X\bullet f$.
\end{itemize}
\end{definition}
The result is that $X$ and $X\bullet f$ is mapped to the same element in $\D$. For more details on the group $G$ and the group action $\bullet$, see \cite[Proposition 4.5]{GHR} and \cite[Proposition 3.4]{HR07}.
\begin{proposition}
\label{M and f}
Let $f\in G$ and $X\in\F$, then $M(X\bullet f) = M(X)$.
\end{proposition}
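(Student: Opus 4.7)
The plan is to verify the three components of $M(X\bullet f)$ match those of $M(X)$ componentwise. Write $M(X)=(u,\rho,\mu)$ and $M(X\bullet f)=(\bar u,\bar\rho,\bar\mu)$.

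For the $u$-component, since $f\in G$ is a homeomorphism of $\R$, for every $x\in\R$ and every $\xi\in(y\circ f)^{-1}(x)$ the point $\eta:=f(\xi)$ lies in $y^{-1}(x)$. Hence $(U\circ f)(\xi)=U(\eta)$, which depends only on $x$ by Proposition \ref{proposition M}, and equals $u(x)$. Thus $\bar u=u$.

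For the measure components the strategy is to invoke the functorial identity $(y\circ f)_{\#}=y_{\#}\circ f_{\#}$ and then reduce $f_{\#}((H\circ f)_{\xi}\,\d\xi)$ to $H_{\xi}\,\d\xi$ via a Lipschitz change of variables. Since $f\in G$ is bi-Lipschitz, the chain rule yields $(H\circ f)_{\xi}=(H_{\xi}\circ f)\cdot f'$ almost everywhere; testing $f_{\#}((H_{\xi}\circ f)\,f'\,\d\xi)$ against an arbitrary continuous, compactly supported $\phi$ and substituting $\eta=f(\xi)$ gives
\begin{equation*}
\int\phi(f(\xi))(H_{\xi}\circ f)(\xi)f'(\xi)\,\d\xi=\int\phi(\eta)H_{\xi}(\eta)\,\d\eta,
\end{equation*}
so $\bar\mu=y_{\#}(H_{\xi}\,\d\xi)=\mu$. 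Replacing $H_{\xi}$ by $r$ throughout yields $\bar\rho\,\d x=\rho\,\d x$ by the same computation.

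The argument is essentially formal, and the main obstacle is purely bookkeeping: the possible non-injectivity of $y$ plays no role because everything is phrased at the level of measures, and the change-of-variables step is justified by the bi-Lipschitz regularity built into $G$ through the requirements $f-\Id,f^{-1}-\Id\in W^{1,\infty}(\R)$, which combined with Lemma \ref{lipschitz lemma} ensure that the Jacobian factor $f'$ and its reciprocal on $f^{-1}$ behave well enough to apply the standard area formula. One may alternatively verify $\bar\mu=\mu$ at the level of distribution functions, writing $\bar\mu((-\infty,x])$ as an integral over $(y\circ f)^{-1}((-\infty,x])=f^{-1}\bigl(y^{-1}((-\infty,x])\bigr)$ and settling it by the same substitution, as in the analogous results of \cite{HR07,GHR}.
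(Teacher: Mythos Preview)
Your proposal is correct and follows essentially the same approach as the paper: both arguments reduce the equality of the pushforward measures to a change of variables under the bi-Lipschitz relabeling $f$, the only cosmetic difference being that you test against $\phi\in C_c(\R)$ while the paper evaluates on measurable sets $A$ directly. Your treatment of the $u$-component is in fact more explicit than the paper's, which simply cites \cite[Theorem 3.11]{HR07} for $u$ and $\mu$.
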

\begin{proof}
This proof is similar to that of \cite[Theorem 3.11]{HR07}. Let $X\in\F$ and $f\in G$ be given and let $(u,\rho,\mu) = M(X)$, and $(\bar{u},\bar{\rho},\bar{\mu})=M(X\bullet f)$, respectively. For a proof that $u=\bar{u}$ and $\mu = \bar{\mu}$ see \cite[Theorem 3.11]{HR07}. We prove that $\rho\:\d x = \bar{\rho}\:\d x$ in the sense of measures. Let $A\subseteq \R$ be of finite measure, and recall that $\rho\:\d x = y_{\#}(r\:\d\xi)$. Then
\begin{align}
\bar{\rho}\:\d x (A) &= (y\circ f)_{\#}( r\circ f f_{\xi}\:\d\xi)(A) \nonumber \\
	& = \int_{(y\circ f)^{-1}(A)}r\circ f(\xi)f_{\xi}(\xi)\:\d\xi.
\end{align}
Since $f$ is invertible and Lipschitz continuous, and $r\in L^1_{loc}(\R)$ we can change variables to obtain
\begin{align}
\label{rho is bar rho}
\int_{(y\circ f)^{-1}(A)}r\circ f(\xi)f_{\xi}(\xi)\:\d\xi &= \int_{f\circ (y\circ f)^{-1}(A)}r(\xi)\d\xi \nonumber \\
	&= \int_{y^{-1}(A)}r(\xi)\:\d\xi = y_{\#}(r\:\d\xi)(A).
\end{align}
In the last equality we have used that 
\begin{equation}
f(\{\xi\mid y\circ f(\xi)\in A\}) = \{f(\xi)\mid y\circ f(\xi)\in A\} = \{\xi\mid y(\xi)\in A\},
\end{equation}
since $f$ is onto, continuous, and strictly increasing. Equation \eqref{rho is bar rho} implies that $\rho = \bar{\rho}$ almost everywhere.
\end{proof}
We need that when one maps an element of Eulerian coordinates to Lagrangian coordinates and back that one should end up with the same element. In lieu of the previous propositions the converse cannot hold for $\F$. If we restrict $M$ to $\F_0$, however, we get that the composition $L\circ M$ is the identity function on $\F_0$.
\begin{lemma}[{\cite[Theorem 3.12]{HR07}}]
\label{lemma M and L}
The functions $L$ and $M$ satisfy
\begin{align}
M\circ L &= \Id_{\D},\\
L\circ M &= \Id_{\F_0},
\end{align}
when $M$ is restricted to $\F_0$. 
\end{lemma}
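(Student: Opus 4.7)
The plan is to follow the proof of \cite[Theorem 3.12]{HR07} and supplement it with a verification for the new density variable $r$. Passing from $L^2$ to $L^{\infty}$ for $u$ and $U$ does not alter that argument, since the identities in question depend only on the pointwise definitions of $u, U$ together with the monotonicity of $y$.

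First I would prove $M\circ L=\Id_{\D}$. Starting from $(u,\rho,\mu)\in\D$, set $(y,U,H,r)=L(u,\rho,\mu)$ and $(\bar u,\bar\rho,\bar\mu)=M(y,U,H,r)$. The identities $\bar u=u$ and $\bar\mu=\mu$ follow exactly as in \cite[Theorem 3.12]{HR07}. For $\bar\rho=\rho$ I would combine $r=(\rho\circ y)\,y_{\xi}$ from Definition \ref{definition L} with $\bar\rho\,\d x=y_{\#}(r\,\d\xi)$ from Definition \ref{definition M}, so that for any Borel set $A$,
\[
\bar\rho\,\d x(A)=\int_{y^{-1}(A)}\rho(y(\xi))\,y_{\xi}(\xi)\,\d\xi.
\]
On $\{y_{\xi}=0\}$ the integrand vanishes; on $\{y_{\xi}>0\}$ the change of variables $x=y(\xi)$, together with Lemma \ref{lipschitz lemma} applied to discard the Lebesgue-null set $y(\{y_{\xi}=0\})$, returns $\int_{A}\rho(x)\,\d x$.

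Next I would prove $L\circ M=\Id_{\F_{0}}$. Let $X=(y,U,H,r)\in\F_{0}$, $(u,\rho,\mu)=M(X)$, and $\bar X=L(u,\rho,\mu)$. Because $y+H=\Id$ on $\F_{0}$, one has $\mu((-\infty,y(\xi)))+y(\xi)=\xi$, and feeding this into the supremum formula of Definition \ref{definition L} yields $\bar y=y$ as in \cite[Theorem 3.12]{HR07}; then $\bar H=\Id-\bar y=H$ and $\bar U=u\circ\bar y=U$ are immediate. The genuinely new identity is $\bar r=r$. From Definition \ref{def_F}(iii), $r=0$ a.e.\ on $\{y_{\xi}=0\}$, and the same vanishing holds for $\bar r=(\rho\circ y)\,y_{\xi}$ by construction. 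On $\{y_{\xi}>0\}$ I would test against arbitrary Borel sets $A$ and use $\rho\,\d x=y_{\#}(r\,\d\xi)$ to obtain
\[
\int_{y^{-1}(A)\cap\{y_{\xi}>0\}} r\,\d\xi=\int_{A}\rho\,\d x=\int_{y^{-1}(A)\cap\{y_{\xi}>0\}}\rho(y(\xi))\,y_{\xi}(\xi)\,\d\xi,
\]
which forces $r=\bar r$ a.e. The main obstacle is the careful bookkeeping on the set $\{y_{\xi}=0\}$, where $y$ collapses intervals to points and where the singular part of $\mu$ is concentrated; Lemma \ref{lipschitz lemma} is exactly the tool that removes these images from $A$ and legitimizes the change of variables on the complementary set.
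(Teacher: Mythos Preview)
The paper does not supply its own proof of this lemma; it is stated as a direct citation of \cite[Theorem 3.12]{HR07} and left unproved. Your proposal is precisely the argument one would write here: you follow the cited proof for the $(y,U,H)$ components and add the bookkeeping for the new variable $r$ on both sides, using Definition~\ref{def_F}(iii) to see that $r$ vanishes where $y_{\xi}=0$ and the push-forward relation plus change of variables where $y_{\xi}>0$. That is correct and is exactly the natural extension, so there is nothing to compare beyond noting that the paper defers the whole thing to the reference.
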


\section{Existence of solutions}\label{section existence}
In the previous section we saw that the space of Eulerian coordinates could be represented by Lagrangian coordinates. We now want to reformulate the initial value problem of \eqref{hunter-saxton-system} in Lagrangian coordinates. In this section we motivate the system \eqref{lagrangian system}, and show the existence of solutions for this system. Then we define conservative weak solutions of \eqref{hunter-saxton-system}, and show that we can construct such solutions by mapping the initial data from Eulerian to Lagrangian coordinates, solve \eqref{lagrangian system}, and map the solution back to Eulerian coordinates.

Due to the fact that a finite amount of energy accumulates in a point at wave breaking, we replace $(u_x^2+\rho^2) \:\d x$ by a measure $\mu$ such that $\mu_{ac} = (u_x^2+\rho^2)\:\d x$. Then the conservation law \eqref{conservation law} reads 
\begin{equation}
\label{conservation law mu}
\mu_t + (u\mu)_x = 0.
\end{equation}
Let $\xi\in\R$ and define $y$ by
\begin{equation}
\frac{\d}{\d t}y(\xi,t) = u(y(\xi,t),t),\quad y(\xi,0) = y_0(\xi).
\end{equation}
Then we define $U(\xi,t) = u(y(\xi,t),t)$, and for $y(\xi,t) \in \left(\mathrm{supp}\: \mu_{s}(t)\right)^c$, we define $H(\xi,t) = \mu\big((-\infty,y(\xi,t)]\big)$. We assume that $y(\xi,t)\in \left(\mathrm{supp}\:\mu_{s}(t)\right)^c$ and get
\begin{equation}
\frac{\d}{\d t}U(\xi,t) = u_t(y(\xi,t),t) + U(\xi,t)u_x(y(\xi,t),t) = \frac 12 H(\xi,t)-\frac 14 H_{\infty},
\end{equation}
where $U(\xi,0) = u(y_0(\xi),0)$, and we have used the equation for $u$ in \eqref{hunter-saxton-system}. Since $y(\xi,t)\in \left(\mathrm{supp}\:\mu_{s}(t)\right)^c$ we have from the conservation law \eqref{conservation law mu} that
\begin{align}
\frac{\d}{\d t}H(\xi,t) &= \mu_t\left((-\infty,y(\xi,t)]\right) + \frac{\d\mu}{\d x}(y(\xi,t),t)U(\xi,t)\nonumber\\
	&= -U(\xi,t)\frac{\d\mu}{\d x}(y(\xi,t),t) + U(t)\frac{\d\mu}{\d x}(y(\xi,t),t) = 0,
\end{align}
where $H(\xi,0) = \mu\big((-\infty,y_0(\xi)]\big)$. Since $\rho$ is a conserved variable it is natural to look at $\rho(x)\:\d x = \rho(y(\xi))y_{\xi}(\xi)\:\d\xi$. Define now $r(t) = \rho(y(t),t)y_{\xi}(t)$, then
\begin{align}
\frac{\d}{\d t}r(t) &= \left(\rho_t(y(t),t)+ \rho_x(y(t),t)U(t)\right)y_{\xi}(t) + \rho(y(t),t)U_{\xi}(t)\nonumber\\ 
	&= -\rho(y(t),t)U_{\xi}(t)+\rho(y(t),t)U_{\xi}(t) = 0,\\
r(0) &= \rho(y_0,0)y_{0\xi},
\end{align}
where we have used the equation for $\rho$ in \eqref{hunter-saxton-system}. The derivation assumed that $y(t)$ was outside the support of the singular part of $\mu(t)$, but we will extend the system to all of $\R$.

The system \eqref{lagrangian system} can be solved explicitly. We will be interested in the initial value problem with initial values in $\F_0$.
\begin{proposition}
The solution of the system \eqref{lagrangian system} with initial data
\begin{subequations}
\label{lagrangian solutions proposition}
\begin{align}
y|_{t=0} &= y_0,\\
U|_{t=0} &= U_0,\\
H|_{t=0} &= H_0,\\
r|_{t=0} &= r_0,
\end{align}
\end{subequations}
in $\F_0$, is given by
\begin{subequations}
\label{lagrangian solutions}
\begin{align}
y(\xi,t) &= \frac 14\big(H_0(\xi)-\frac 12 H_{\infty}\big)t^2 + U_0(\xi)t + y_0(\xi),\\
U(\xi,t) &= \frac 12\big(H_0(\xi)-\frac 12 H_{\infty}\big)t + U_0(\xi),\\
H(\xi,t) &= H_0(\xi),\\
r(\xi,t) &= r_0(\xi),
\end{align}
where $H_{\infty} = \|H\|_{\infty} = \lim_{\xi\rightarrow\infty}H(\xi)$.
\end{subequations}
\end{proposition}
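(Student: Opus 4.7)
The plan has three parts: solve the ODEs explicitly, check that the algebraic constraints defining $\F$ are preserved, and handle the subtler uniform lower bound in condition (ii) of Definition \ref{def_F}.

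The first two parts are essentially computational. Since $H_t = 0$ and $r_t = 0$, the conserved components $H_0$ and $r_0$ are immediate. The equation $U_t = \frac{1}{2}H - \frac{1}{4}H_\infty$ then reduces, at each fixed $\xi$, to an ODE with a constant right-hand side, integrating directly to the stated formula for $U$; feeding this into $y_t = U$ and integrating once more yields $y$. The initial conditions, the regularity condition (i), and membership in $B$ follow by inspection since each formula is a polynomial in $t$ with coefficients inherited from $(y_0-\Id, U_0, H_0, r_0)\in B$ (noting in particular that $H_0 - \frac{1}{2}H_\infty \in E_2$ even though $H_0\in E_1$). Differentiating in $\xi$ and expanding both $y_\xi H_\xi$ and $U_\xi^2 + r^2$ as polynomials in $t$, every coefficient matches automatically except the constant term, which is supplied precisely by the initial compatibility $y_{0\xi}H_{0\xi} = U_{0\xi}^2 + r_0^2$; this gives condition (iii). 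For $y_\xi\geq 0$, view $y_\xi$ as a quadratic in $t$ with non-negative leading coefficient $\frac{1}{4}H_{0\xi}$ and discriminant $U_{0\xi}^2 - H_{0\xi}y_{0\xi} = -r_0^2\leq 0$, handling the degenerate case $H_{0\xi}=0$ separately (it forces $U_{0\xi}=r_0=0$, leaving $y_\xi = y_{0\xi}\geq 0$).

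The main obstacle is the uniform positivity $y_\xi + H_\xi\geq c(t)>0$ a.e. Using the initial constraint $y_0 + H_0 = \Id$ from $X_0\in\F_0$, so that $y_{0\xi}+H_{0\xi}=1$ a.e., we rewrite $y_\xi + H_\xi = \phi(H_{0\xi},U_{0\xi})$ with $\phi(h,u) = \frac{1}{4}ht^2 + ut + 1$. The pair $(H_{0\xi}(\xi), U_{0\xi}(\xi))$ lies a.e.\ in the compact set $K = \{(h,u) : 0\leq h\leq 1,\ u^2\leq h(1-h)\}$, using $0\leq H_{0\xi}\leq y_{0\xi}+H_{0\xi}=1$ from (ii) together with $U_{0\xi}^2\leq y_{0\xi}H_{0\xi}=H_{0\xi}(1-H_{0\xi})$ from (iii). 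On $K\cap\{h>0\}$, completing the square gives
\[
\phi(h,u) = \frac{1}{h}\Bigl(\frac{ht}{2}+u\Bigr)^2 + 1 - \frac{u^2}{h}\geq 1 - \frac{u^2}{h}\geq h > 0,
\]
while on $K\cap\{h=0\}$ the constraint forces $u=0$ and $\phi=1$. Since $\phi$ is continuous and strictly positive on the compact set $K$, the constant $c(t) := \min_K\phi > 0$ furnishes the required uniform lower bound and completes the verification that $X(t)\in\F$ for every $t\in\R$.
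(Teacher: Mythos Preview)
Your argument is correct, but you prove considerably more than the proposition itself asserts. The paper's proof here is two sentences: integrate the equations in the order $r$, $H$, $U$, $y$, and note uniqueness from linearity. The verification that $X(t)\in\F$ is deferred to the next result, Theorem~\ref{solution_operator_lagrangian}, where it is established for general initial data in $\F$, not just $\F_0$.

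Where your approach genuinely diverges from the paper is the lower bound $y_\xi+H_\xi\geq c(t)>0$. You exploit the $\F_0$ normalization $y_{0\xi}+H_{0\xi}=1$ to confine $(H_{0\xi},U_{0\xi})$ to a compact set and minimize a continuous positive function there. The paper (in Theorem~\ref{solution_operator_lagrangian}) instead derives a differential inequality: from $|U_\xi|\leq\sqrt{y_\xi H_\xi}\leq\tfrac12(y_\xi+H_\xi)$ one gets $\tfrac{d}{dt}(y_\xi+H_\xi)^{-1}\leq\tfrac12(y_\xi+H_\xi)^{-1}$, and Gr\"onwall yields $y_\xi+H_\xi\geq c(0)e^{-t/2}$. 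Your compactness argument is elegant and self-contained, but the Gr\"onwall route buys two things the paper needs: it applies to all of $\F$ rather than only $\F_0$, and the explicit constant $e^{-t/2}$ is reused later (Proposition~\ref{y plus H in G}, Lemma~\ref{lemma J relabeling f}) in the Lipschitz estimate for the metric.
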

\begin{proof}
To find the solutions we integrate \eqref{lagrangian system} with respect to $t$, starting with the equations for $r$ and $H$, and then proceed to $U$ and finally $y$. Uniqueness follows from the linearity of the system.
\end{proof}
\begin{example}
Let $X=L(u,\rho,\mu)$ be as in Example \ref{basic example}. Then $S_t(X)$ equals
\begin{align*}
y(\xi,t) &=
	\begin{cases}
	-\frac{5}{16}t^2 + t + \xi,& \xi\leq -1,\\
	\frac 18(\xi-\frac 32)t^2-\frac 12(\xi-1)t+\frac 12(\xi-1),& -1\leq\xi\leq 1,\\
	\frac 14(\xi-\frac 54)t^2, & 1\leq \xi \leq \frac 32,\\
	\frac 18(\xi-1)t^2 + \frac 12(\xi-\frac 32), &\frac 32\leq\xi\leq\frac 72,\\
	\frac{5}{16}t^2+\xi-\frac 52,&  \frac 72\leq \xi,
	\end{cases}\\
U(\xi,t) &= 
	\begin{cases}
	-\frac 58t + 1, & \xi\leq -1\\
	\frac 14(\xi - \frac 32)t - \frac 12(\xi-1), & -1\leq \xi\leq 1,\\
	\frac 12(\xi-\frac 54)t, & 1\leq \xi\leq \frac 32,\\
	\frac 14(\xi-1)t, & \frac 32\leq \xi \leq \frac 72,\\
	\frac 58 t, & \frac 72 \leq \xi,
	\end{cases}\\
H(\xi,t) &= H(\xi,0),\\
r(\xi,t) &= r(\xi,0).
\end{align*}
\end{example}
\begin{theorem}
\label{solution_operator_lagrangian}
The solution operators $S_t:\mathcal{F}\rightarrow\mathcal{F}$ constitute a semigroup. Furthermore the semigroup is Lipschitz continuous in $\F$ in the sense that for $X,\bar{X}\in\F$ there holds
\begin{equation}
\label{lipschitz in F}
\|S_t(X)-S_t(\bar{X})\|_B \leq (\frac 12t^2+t+1)\|X-\bar{X}\|_B.
\end{equation}
\end{theorem}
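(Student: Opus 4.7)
The plan is to work directly from the explicit solution formulas \eqref{lagrangian solutions} and verify the three ingredients in turn: that $S_t$ sends $\F$ into $\F$, that $\{S_t\}$ satisfies the semigroup law, and that the quantitative Lipschitz bound in $B$ holds.

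First I would check that $S_t(\F)\subseteq \F$ by inspecting each clause of Definition \ref{def_F} on the evolved quadruple. Regularity (i) is preserved because each of $y(t)-\Id, U(t), H(t)=H_0, r(t)=r_0$ is an (at most) quadratic polynomial in $t$ with coefficients in the ambient function spaces. The algebraic constraint (iii) reduces to a direct verification: substituting the explicit $y_\xi(t),H_\xi(t),U_\xi(t),r(t)$ into $y_\xi H_\xi-U_\xi^2-r^2$ and cancelling the $t$-dependent terms leaves exactly $y_{0,\xi}H_{0,\xi}-U_{0,\xi}^2-r_0^2=0$. For the sign condition in (ii), I would view $y_\xi(t)=\tfrac14 H_{0,\xi}t^2+U_{0,\xi}t+y_{0,\xi}$ as a quadratic in $t$ whose discriminant equals $U_{0,\xi}^2-y_{0,\xi}H_{0,\xi}=-r_0^2\leq 0$; since the leading coefficient and constant term are both nonnegative, $y_\xi(t)\geq 0$ follows.

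The hard part will be the uniform lower bound $y_\xi(t)+H_\xi(t)\geq c>0$. I would argue by cases on the size of $H_{0,\xi}$, with a threshold $\delta>0$ to be chosen in terms of $t$. On the set $\{H_{0,\xi}\geq\delta\}$ the bound $y_\xi+H_\xi\geq H_{0,\xi}\geq\delta$ is immediate. On the complementary set the constraint $U_{0,\xi}^2\leq y_{0,\xi}H_{0,\xi}$ forces $|U_{0,\xi}|\leq\sqrt{\|y_{0,\xi}\|_\infty\delta}$ (using $y_0-\Id\in W^{1,\infty}$), while the initial bound yields $y_{0,\xi}\geq c_0-\delta$; hence $y_\xi(t)\geq c_0-\delta-\sqrt{\|y_{0,\xi}\|_\infty\delta}\,t\geq c_0/2$ once $\delta$ is small enough. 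Combining, $y_\xi(t)+H_\xi(t)\geq\min(\delta,c_0/2)>0$ uniformly in $\xi$.

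The semigroup identity $S_{t+s}=S_t\circ S_s$ then follows by straight substitution into the explicit formulas, using that $H_\infty$ is conserved in time because $H(t)=H_0$. Finally, for the Lipschitz estimate I would subtract the two explicit solutions and expand $y(t)-\bar y(t),U(t)-\bar U(t),H(t)-\bar H(t),r(t)-\bar r(t)$ in terms of differences of the initial data, bounding $|H_\infty-\bar H_\infty|\leq\|H_0-\bar H_0\|_\infty\leq\|H_0-\bar H_0\|_{E_1}$ and splitting each $E_2$ or $E_1$ norm into its sup and derivative-$L^2$ contributions. Collecting coefficients in $\|S_t(X)-S_t(\bar X)\|_B$, the coefficient of $\|H_0-\bar H_0\|_{E_1}$ comes out to $\tfrac38 t^2+\tfrac34 t+1$, of $\|U_0-\bar U_0\|_{E_2}$ to $t+1$, and of $\|y_0-\bar y_0\|_{E_2}$ and $\|r_0-\bar r_0\|_2$ to $1$; each is dominated by $\tfrac12 t^2+t+1$, yielding \eqref{lipschitz in F}.
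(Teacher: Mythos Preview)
Your proposal is correct, and the overall architecture matches the paper's: explicit formulas, verification of Definition~\ref{def_F}, then the Lipschitz bound. The one genuinely different ingredient is your treatment of the lower bound $y_\xi(t)+H_\xi(t)\geq c>0$. The paper instead estimates
\[
\frac{\d}{\d t}\frac{1}{y_\xi+H_\xi}=-\frac{U_\xi}{(y_\xi+H_\xi)^2}\leq \frac{\sqrt{y_\xi H_\xi}}{(y_\xi+H_\xi)^2}\leq \frac{1}{2}\cdot\frac{1}{y_\xi+H_\xi},
\]
using the constraint $U_\xi^2\leq y_\xi H_\xi$ and the AM--GM inequality, and then applies Gr\"onwall to obtain the explicit bound $y_\xi(t)+H_\xi(t)\geq c(0)e^{-t/2}$. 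Your threshold argument is more elementary and perfectly adequate here, but it yields an implicit, smaller constant; the paper's explicit $e^{-t/2}$ is actually reused later (Proposition~\ref{y plus H in G} and Lemma~\ref{lemma J relabeling f}) to control the relabeling map $\Pi$, so within the paper the differential-inequality route pays off. Your discriminant argument for $y_\xi(t)\geq 0$ is slightly slicker than the paper's (which deduces it a posteriori from $H_\xi\geq 0$ and the preserved identity $y_\xi H_\xi=U_\xi^2+r^2$), and your Lipschitz bookkeeping tracks sharper constants than the paper's looser $\|H-\bar H-\tfrac12 H_\infty+\tfrac12\bar H_\infty\|_\infty\leq\tfrac32\|H-\bar H\|_\infty$; both routes land on the same final constant $\tfrac12 t^2+t+1$.
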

\begin{proof}
Let $X_0=(y_0,U_0,H_0,r_0)$ denote the initial data and $S_t(X_0)= X(t) =\big(y(t),U(t),H(t),r(t)\big)$ the solution of \eqref{lagrangian system} at $t$. We need to show that the solution is in $\F$. For each $t$ we have that $H$ is bounded by $\|H_0\|_{\infty}$, $|U|$ by $\frac{1}{4}\|H_0\|_{\infty}t+\|U_0\|_{\infty}$ and $|y|$ by $\frac{1}{8}\|H_0\|_{\infty}t^2+ \|U_0\|_{\infty}t+\|y_0\|_{\infty}$. Since the solutions \eqref{lagrangian solutions} are linear combinations of the initial data plus a constant we can differentiate the solutions with respect to $\xi$. If we differentiate the solutions \eqref{lagrangian solutions} with respect to $\xi$ we obtain
\begin{subequations}
\label{lagrangian solutions differentiated}
\begin{align}
y_{\xi}(\xi,t) &= \frac 14 H_{0\xi}(\xi)t^2 + U_{0\xi}(\xi)t+y_{0\xi}(\xi),\\
U_{\xi}(\xi,t) &= \frac 12 H_{0\xi}(\xi)t + U_{0\xi}(\xi),\\
H_{\xi}(\xi,t) &= H_{0\xi}(\xi).
\end{align}
\end{subequations}
We have the following estimates
\begin{align}
\label{derivative bound}
r(t) &= r_0,\nonumber\\
0\leq H_{\xi}(t)&\leq H_{0\xi},\nonumber\\
|U_{\xi}(t)| &\leq \frac{1}{2}H_{0\xi}t+|U_{0\xi}|,\nonumber\\
|y_{\xi}(t)-1| &\leq \frac{1}{4}H_{0\xi}t^2 + |U_{0\xi}|t+|y_{0\xi}-1|,
\end{align}
which are square integrable and bounded. Furthermore, $y-\Id ,U,H\in W^{1,\infty}(\R)$ as this holds for the initial data, and for each $t$ the solutions are linear combinations of the initial data. Thus property $(i)$ in Definition \ref{def_F} is proved. Consider now
\begin{align}
\label{(iii) holds in t}
U_{\xi}^2 &= \frac{1}{4}H_{0\xi}^2t^2 + U_{0\xi}H_{0\xi}t + U_{0\xi}^2, \nonumber\\
r^2 &= r_0^2 , \nonumber\\
y_{\xi}H_{\xi} &= \frac{1}{4}H_{0\xi}^2t^2 +U_{0\xi}H_{0\xi}t + U_{0\xi}^2 + r_0^2,
\end{align}
where it has been used that $H_{0\xi}y_{0\xi} = U_{0\xi}^2+r_0^2$. The above proves that $(iii)$ in Definition \ref{def_F} holds. Non-negativity of $y_{\xi}$ follows from $H_{\xi} = H_{0\xi}$ being non-negative, and thus $y_{\xi}$ has to be non-negative due to \eqref{(iii) holds in t}. We show that there exists a $c(t)$ dependent on $t$ such that $(y(t)+H(t))_{\xi}\geq c(t)>0$. By assumption it holds for $t=0$ with a constant $c(0)$, and since for each $\xi$ the functions $y_{\xi}$ and $H_{\xi}$ are continuous in $t$ it will hold on some interval $[0, T(\xi))$. We choose $T(\xi)$ to be the maximal time for which it holds. Then for $t\in [0,T(\xi))$ we have
\begin{align}
\frac{\d}{\d t}\frac{1}{y_{\xi}+H_{\xi}} &= -\frac{U_{\xi}}{(y_{\xi}+H_{\xi})^2}\nonumber\\
	&\leq \frac{1}{y_{\xi}+H_{\xi}}\frac{|U_{\xi}|}{y_{\xi}+H_{\xi}} \nonumber\\
	&\leq \frac{1}{y_{\xi}+H_{\xi}}\frac{\sqrt{y_{\xi}H_{\xi}}}{y_{\xi}+H_{\xi}}\nonumber\\
	&\leq \frac 12\frac{1}{y_{\xi}+H_{\xi}}.
\end{align}
By Gr\" onwall's inequality
\begin{equation}
\label{y plus H bound}
\frac{1}{y_{\xi}+H_{\xi}}(t) \leq \frac{1}{c(0)}e^{\frac 12t},
\end{equation}
where $c(0)$ is the initial constant, and $T(\xi)$ can be chosen to be arbitrarily big, which shows that property $(ii)$ in Definition \ref{def_F} holds. We prove \eqref{lipschitz in F}. Since the time evolution of the derivatives are linear in derivatives of $X$ we can use \eqref{lagrangian solutions differentiated} to find
\begin{align}
\|y_{\xi}(t)-\bar{y}_{\xi}(t)\|_2 &\leq \frac 14 \|H_{0\xi}-\bar{H}_{0\xi}\|_2 t^2 + \|U_{0\xi}-\bar{U}_{0\xi}\|_2t+\|y_{0\xi}-\bar{y}_{0\xi}\|_2,\\
\|U_{\xi}(t)-\bar{U}_{\xi}(t)\|_2 &\leq \frac 12 \|H_{0\xi}-\bar{H}_{0\xi}\|_2 t + \|U_{0\xi}-\bar{U}_{0\xi}\|_2.
\end{align}
In the $L^{\infty}$-part the term $H_{\infty}$ makes the solution operator nonlinear. However, we can bound
\begin{equation}
\label{estimate H lipschitz}
\|H-\bar{H} -\frac 12 H_{\infty}+\frac 12 \bar{H}_{\infty}\|_{\infty} \leq \frac 32\|H-\bar{H}\|_{\infty},
\end{equation}
and thus
\begin{align}
\|y(t)-\bar{y}(t)\|_{\infty} &\leq \frac 12 \|H_0-\bar{H}_0\|_{\infty} t^2 + \|U_0-\bar{U}_0\|_{\infty}t+\|y_0-\bar{y}_0\|_{\infty},\\
\|U(t)-\bar{U}(t)\|_{\infty} &\leq \|H_0-\bar{H}_0\|_{\infty} t + \|U_0-\bar{U}_0\|_{\infty}.
\end{align}
Since $H$ and $r$ do not change in $t$, the estimate is proved. The estimate \eqref{estimate H lipschitz} implies that the right-hand side of \eqref{lagrangian system} is Lipschitz in $\F$, and thus $S_t$ satisfies the semigroup property.
\end{proof}
We define the map $T_t:\D\rightarrow\D$ by
\begin{equation}
\label{def_T_t}
T_t = M\circ S_t\circ L.
\end{equation}
\begin{definition}
\label{definition_solution}
A triple $(u,\rho,\mu)\in\D$ is said to be a conservative weak solution of \eqref{hunter-saxton-system} if for any test function $\phi\in C_0^{\infty}(\R\times [0,\infty))$,
\begin{subequations}
\begin{align}
&\int\limits_0^{\infty}\int\limits_{\R}\big(u\phi_t+\frac{1}{2}u^2\phi_x+\frac 14\big(\int\limits_{-\infty}^x \:\d \mu(t)- \int\limits_{x}^{\infty}\:\d \mu(t)\big)\phi\:\d x\d t =-\int\limits_{\R}\big(u\phi|_{t=0}\big)\:\d x,\\
&\int\limits_0^{\infty}\int\limits_{\R}\big(\rho\phi_t + \rho u\phi_x\big)\:\d x\d t = -\int\limits_{\R}(\rho\phi)|_{t=0}\:\d x,\\
&\int\limits_0^{\infty}\int\limits_{\R}(\phi_t + u\phi_x)\:\d\mu(t)\d t = \int\limits_{\R}\phi|_{t=0}\d\mu|_{t=0}\:\d t,
\end{align}
\end{subequations}
and in addition
\begin{equation}
\mu(t)(\R) = \mu_0(\R),
\end{equation}
holds for all $t\geq 0$.
\end{definition}
We can now use the operator $T_t$ defined by \eqref{def_T_t} to construct conservative weak solutions of \eqref{hunter-saxton-system}.
\begin{theorem}
\label{theorem_existence}
The operator $T_t: (u,\rho,\mu)\mapsto \left(u(t),\rho(t),\mu(t)\right)$ maps an initial value to a conservative weak solution of \eqref{hunter-saxton-system} in the sense of Definition \ref{definition_solution}. 
\end{theorem}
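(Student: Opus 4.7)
The plan is to verify each clause of Definition \ref{definition_solution} by pulling the $(x,t)$ integrals back to $(\xi,t)$ via $L$, using the explicit Lagrangian solutions from Theorem \ref{solution_operator_lagrangian}, and returning to Eulerian coordinates via $M$. Fix initial data $(u_0,\rho_0,\mu_0) \in \D$, set $X_0 = L(u_0,\rho_0,\mu_0) \in \F_0$, $X(t) = S_t(X_0) = (y(t),U(t),H(t),r(t))$, and $(u(t),\rho(t),\mu(t)) = M(X(t))$; by Lemma \ref{lemma M and L} the initial data are respected. The explicit formulas \eqref{lagrangian solutions} show $y(\xi,t) \to \pm\infty$ as $\xi \to \pm\infty$, so $\phi(y(\xi,t),t) = 0$ for $|\xi|$ large whenever $\phi \in C_c^\infty(\R \times [0,\infty))$, and all boundary terms in the integrations by parts below vanish. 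Total mass conservation is immediate: $\mu(t)(\R) = \int_\R H_\xi(\xi,t)\, d\xi = H_\infty$, which is independent of $t$ because $H(\xi,t) = H_0(\xi)$.

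Equations (b) and (c) follow quickly from $r_t = 0$, $H_t = 0$, and $y_t = U$. Since
\begin{equation*}
\frac{d}{dt}\bigl[\phi(y(\xi,t),t)\, r(\xi,t)\bigr] = (\phi_t + U\phi_x)(y(\xi,t),t)\, r(\xi,t),
\end{equation*}
integration in $t$ from $0$ to $\infty$ (with boundary contribution $-\phi(y_0,0)\, r_0$ at $t=0$) and in $\xi$, followed by the push-forward identity $\rho(t)\, dx = y(\cdot,t)_\#(r(\cdot,t)\, d\xi)$, produces equation (b). The same argument with $H_{0\xi}$ in place of $r$ and $\mu(t) = y(\cdot,t)_\#(H_\xi\, d\xi)$ gives equation (c), with a minus sign on the initial-data term (the plus sign and the stray $dt$ in the stated definition appear to be a typographical error).

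Equation (a) requires the most work. After changing variables $x = y(\xi,t)$ and using $y_t = U$, $y_{\xi t} = U_\xi$, the integrand rearranges as
\begin{equation*}
(U\phi_t + \tfrac{1}{2}U^2\phi_x)(y,t)\, y_\xi = \partial_t\bigl[U\phi(y,t)\, y_\xi\bigr] - U_t\phi(y,t)\, y_\xi - \tfrac{1}{2}U^2\phi_x(y,t)\, y_\xi - U\phi(y,t)\, U_\xi.
\end{equation*}
An integration by parts in $\xi$ yields $\int U U_\xi\, \phi(y,t)\, d\xi = -\int \tfrac{1}{2}U^2\phi_x(y,t)\, y_\xi\, d\xi$, cancelling the last two terms. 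Integrating the remainder in $t$ then produces the initial-data contribution $-\int u_0\phi|_{t=0}\, dx$ and a residual $-\int_0^\infty\int U_t\phi(y,t)\, y_\xi\, d\xi\, dt$. Substituting $U_t = \tfrac{1}{2}H - \tfrac{1}{4}H_\infty$ and using that $2H(\xi,t) - H_\infty = \mu(t)((-\infty,y(\xi,t)]) - \mu(t)((y(\xi,t),\infty))$ almost everywhere (built into the definitions of $L$ and $M$), a push-forward via $y(\cdot,t)$ converts the residual into
\begin{equation*}
-\int_0^\infty \int_\R \tfrac{1}{4}\left(\int_{-\infty}^x d\mu(t) - \int_x^\infty d\mu(t)\right) \phi(x,t)\, dx\, dt,
\end{equation*}
which gives equation (a) after rearrangement.

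The main technical obstacle is making the change of variables rigorous where $y(\cdot,t)$ has flat intervals, which correspond to the atoms of $\mu(t)$ and to the ambiguity of the identification $H(\xi,t) = \mu(t)((-\infty,y(\xi,t)])$. Since $y_\xi = 0$ on those flat pieces, Lemma \ref{lipschitz lemma} guarantees that they do not contribute to the pushed-forward integrals, so the identifications hold $y(\cdot,t)_\#(H_\xi d\xi)$-almost everywhere. This is precisely the technicality already resolved in Section \ref{section mappings} when establishing $M \circ L = \Id_\D$ and $L \circ M = \Id_{\F_0}$, so I would simply invoke the push-forward identities proved there rather than redo the analysis.
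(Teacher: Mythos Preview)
Your proposal is correct and follows essentially the same approach as the paper: pull back to Lagrangian coordinates via $L$, exploit the simple ODEs $y_t=U$, $U_t=\tfrac12 H-\tfrac14 H_\infty$, $H_t=r_t=0$ together with the push-forward identities from Definition~\ref{definition M}, integrate by parts in $t$ (and in $\xi$ for the $u$-equation), and change back. The paper organizes the computation for (a) slightly differently (absorbing the $H$-term before rather than after the time-integration by parts) and handles the $y_\xi=0$ set by noting $U_\xi=r=0$ there, but the substance is identical; your observation about the sign and stray $dt$ in Definition~\ref{definition_solution}(c) is also correct.
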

\begin{proof}
The idea of the proof is to first map the initial data in $\D$ to $\F_0$ by $L$, then solve the problem there and for each $t$ map the solution to $\D$ by $M$. Change of variables must be done on the set $\{\xi\mid y_{\xi}(\xi)>0\}$, but since both $U_{\xi}$ and $r$ equals zero almost everywhere on the complement we can integrate over $\R$ when we change variables in $u$ and $\rho$. Since $\mu(t) = y(t)_{\#}\left(H_{\xi}\:\d\xi\right)$ it holds that $\mu\left((-\infty,x]\right) = \sup\left\{H(\xi)\mid y(\xi) = x\right\}$. Thus
\begin{align*}
&\int\limits_0^{\infty}\int\limits_{\R}\big(u\phi_t+\frac{1}{2}u^2\phi_x+\frac 14\big(\int\limits_{-\infty}^x \:\d \mu(t)- \int\limits_{x}^{\infty} \:\d \mu(t)\big)\phi\:\d x\d t \nonumber\\ &= \int\limits_0^{\infty}\int\limits_{\R}U(\phi_t\circ y+\frac 12 U\phi_x\circ y)y_{\xi}(\xi)\:\d\xi\d t\nonumber\\ &\quad+ \frac 12\int\limits_0^{\infty}\int\limits_{\R}\left(\int\limits_{-\infty}^{y(\xi,t)} \:\d\mu(t)- \frac 12\int\limits_{-\infty}^{\infty} \:\d\mu(t)\right)\phi\circ y y_{\xi}\:\d\xi\d t \nonumber\\
	&= \int\limits_0^{\infty}\int\limits_{\R}\bigg[ U(\xi,t)\big(\frac{\d}{\d t}\phi(y(\xi,t),t)-\frac 12 U\phi_x(y(\xi,t),t)\big)\nonumber\\ &\quad -\frac 12\left(H(\xi,t)-\frac 12 H_{\infty}\right)\phi\circ y \bigg]y_{\xi}\:\d\xi\d t \nonumber\\
	&= \int\limits_0^{\infty}\int\limits_{\R}\big[ \frac{\d}{\d t}(U\phi\circ y) - \frac 12 U^2\phi_x\circ y\big]y_{\xi}\:\d\xi\d t \nonumber\\
	&= \int\limits_0^{\infty}\int\limits_{\R}\big[\frac{\d}{\d t}(U\phi\circ y y_{\xi}) - (\frac 12 U^2\phi\circ y)_{\xi}\big]\:\d\xi\d t \nonumber\\
	&= -\int\limits_{\R} u_0(x)\phi(x,0)\:\d x.
\end{align*}
The equation for $\rho$ is treated in the same way,
\begin{align*}
\int\limits_0^{\infty}\int\limits_{\R}\rho\phi_t + (\rho u)\phi_x\:\d x\d t &= \int\limits_0^{\infty}\int\limits_{\R}\big(\phi_t +  u\phi_x\big)\rho\:\d x\d t \nonumber\\
&= \int\limits_0^{\infty}\int\limits_{\R}\big(\phi_t\circ y +  U\phi_x\circ y\big)r\:\d\xi\d t \nonumber\\
&= \int\limits_0^{\infty}\int\limits_{\R}\frac{\d}{\d t}\phi(y(\xi,t),t)r(\xi,t)\:\d\xi\d t \nonumber\\
&= \int\limits_0^{\infty}\int\limits_{\R}\frac{\d}{\d t}\big(\phi(y(\xi,t),t)r(\xi,t)\big)\:\d\xi\d t \nonumber\\
&= -\int\limits_{\R}\phi(y_0(\xi),0)r_0(\xi)\:\d\xi\nonumber\\
&= -\int\limits_{\R}\phi(x,0)\rho_0(x)\:\d x.
\end{align*}
The equality on conservation of $\mu$ is proved
\begin{align*}
\int\limits_0^{\infty}\int\limits_{\R}(\phi_t+u\phi_x)\:\d\mu(t)\d t &= \int\limits_0^{\infty}\int\limits_{\R}(\phi_t+u\phi_x)\circ y H_{\xi}\:\d\xi\d t \nonumber\\
	&= \int\limits_0^{\infty}\int\limits_{\R}H_{\xi}\frac{\d}{\d t}(\phi\circ y)\:\d\xi\d t \nonumber\\
	&= -\int\limits_{\R} H_{\xi}(\phi\circ y)|_{t=0}\:\d\xi \nonumber\\
	&= - \int\limits_{\R} \phi|_{t=0}\:\d\mu|_{t=0}.
\end{align*}
For every $t$ we have that $y(t)^{-1}(\R) = \R$, and thus
\begin{equation*}
\mu(t)(\R) = \int\limits_{y(t)^{-1}(\R)}H_{\xi}(\xi,t)\:\d\xi = \int\limits_{\R}H_{0\xi}\:\d\xi = \mu_0(\R).
\end{equation*}
\end{proof}
\begin{example}
\label{example solution operator}
Let $(u_0,\rho_0,\mu_0)\in\D$ be as in Example \ref{basic example}. Then $T_t(u_0,\rho_0,\mu_0)$ is given by
\begin{align}
u(x,t) &=
	\begin{cases}
	-\frac 58t + 1, & x \leq -\frac{5}{16}t^2+t-1,\\
	-\frac{x+\frac{1}{16}t^2}{1-\frac 12 t}-\frac 18t, & -\frac{5}{16}t^2+t-1\leq x\leq -\frac{1}{16}t^2,\\
	\frac{2x}{t}, & -\frac{1}{16}t^2\leq x\leq \frac{1}{16} t^2,  \\
	\frac 12\frac{x+\frac 14}{\frac 14 t^2+1}t, & \frac{1}{16}t^2\leq x\leq \frac{5}{16}t^2+1,\\
	\frac 58 t, & \frac{5}{16}t^2 + 1\leq x,
	\end{cases}\\
\rho(x,t) &= 
	\begin{cases}
	0, & x < \frac{1}{16}t^2,\\
	\frac{1}{\frac 14t^2+1}, & \frac{1}{16}t^2\leq x\leq \frac{5}{16}t^2+1,\\
	0, & \frac{5}{16}t^2 + 1 < x,
	\end{cases}\\
\mu(t) &= \mu(t)_{ac} + \frac 12\delta_{(0,0)}(x,t) + \delta_{(-\frac 14,2)}(x,t).
\end{align}
\end{example}
\begin{remark}
Note that in Example \ref{example solution} and \ref{example solution operator} the functions $u(x,0)$ and $\rho(x,0)$ coincide, while in Example \ref{example solution} and Example \ref{example solution operator}, respectively, we have $\mu(0)=\big(u_x(x,0)^2+\rho(x,0)^2\big)\:\d x$ and $\mu(0)=\big(u_x(x,0)^2+\rho(x,0)^2\big)\:\d x + \frac 12\delta_0$. The solutions in Example \ref{example solution} and \ref{example solution operator} differ, hence it is important to include the measure $\mu$ in the description of global solutions and the definition of the Lipschitz metric.
\end{remark}

\section{The Lipschitz metric}\label{section metric}
In this section we construct a metric on $\F_0$ that renders the flow Lipschitz continuous with respect to initial data. We saw in Proposition \ref{M and f} that the mapping $M:\F\rightarrow\D$ is relabeling invariant. An important fact is that relabeling commutes with the solution operator $S_t$.
\begin{proposition}
\label{Semigroup commutes with relabeling}
For any $X\in\F, f\in G$ it holds that $S_t(X\bullet f) = S_t(X)\bullet f$.
\end{proposition}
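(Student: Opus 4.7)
The plan is to compute both sides directly from the explicit solution formula \eqref{lagrangian solutions} and from the definition of $\bullet$, and observe that they coincide componentwise. The only substantive point is that the constant $H_\infty$ appearing in the formula for $y$ and $U$ is invariant under the relabeling $f$; everything else follows by reading off the formulas.

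First I would verify the $H$-invariance. Since $f \in G$ is an orientation-preserving homeomorphism of $\R$ onto itself with $f - \Id \in W^{1,\infty}(\R)$, we have $f(\xi) \to +\infty$ as $\xi \to +\infty$, hence
\begin{equation*}
\lim_{\xi\to\infty}(H_0 \circ f)(\xi) = \lim_{\eta\to\infty}H_0(\eta) = H_\infty.
\end{equation*}
So the constant $H_\infty$ that enters the solution formula computed from the initial datum $X \bullet f$ equals the one computed from $X$.

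Next I would expand $S_t(X \bullet f)$ using \eqref{lagrangian solutions} with initial data $(y_0 \circ f, U_0 \circ f, H_0 \circ f, (r_0 \circ f) f')$. For the $y$-component,
\begin{equation*}
\tfrac14\bigl(H_0(f(\xi)) - \tfrac12 H_\infty\bigr)t^2 + U_0(f(\xi))\,t + y_0(f(\xi)) = y(f(\xi),t),
\end{equation*}
which is exactly the $y$-component of $S_t(X) \bullet f$. The $U$-component is identical modulo obvious changes. For $H$ and $r$, since $H(\xi,t) = H_0(\xi)$ and $r(\xi,t) = r_0(\xi)$ are time-independent, both sides reduce to $H_0 \circ f$ and $(r_0 \circ f) f'$ respectively.

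There is no real obstacle here; the statement is essentially a consequence of the fact that the Lagrangian equations \eqref{lagrangian system} are pointwise in $\xi$, so pre-composing the initial datum with $f$ pre-composes the whole trajectory with $f$, once one checks that the single global quantity $H_\infty$ is also relabeling invariant. The whole proof is two or three lines of algebra after the $H_\infty$-invariance observation.
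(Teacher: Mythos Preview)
Your proof is correct and takes essentially the same approach as the paper, which dispatches the statement in a single line (``any component of $S_t(X)$ is a linear combination of components of $X$''). Your version is in fact more careful: you make explicit the relabeling invariance of the global constant $H_\infty$, which the paper's one-liner silently absorbs into the phrase ``linear combination.''
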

\begin{proof}
Any component of $S_t(X)$ is a linear combination of components of $X$.
\end{proof}
The problem of creating a metric directly on $\F$ is that $X$ and $\bar{X}$ may correspond to the same solution in Eulerian coordinates even if $X\neq\bar{X}$. Thus we will try to compare solutions in $\F_0$, but we must prove that we can reach $\F_0$ from all of $\F$ via the group action $\bullet$ from Definition \ref{definition_G} (ii).
\begin{definition}
Define the map $\Pi:\F\rightarrow\F_0$ by 
\begin{equation}
\Pi X = X\bullet (y+H)^{-1}, \quad X\in\F.
\end{equation}
To ease the notation we write $\Pi X$, despite the fact that $\Pi$ is not a linear operator.
\end{definition}
For the map $\Pi$ to be a relabeling we need that $y+H\in G$.
\begin{proposition}
\label{y plus H in G}
Let $X\in\F_0$, and $X(t)=S_t(X)$. Then for all $t\geq 0$ we have that $y(t)+H(t)\in G$, and $e^{-\frac 12t}\leq y_{\xi}(t) + H_{\xi}(t)\leq \frac 14t^2+t+1$ for almost every $\xi\in\R$.
\end{proposition}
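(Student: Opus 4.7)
The plan is to treat the proposition as two separate assertions, first establishing the two-sided derivative bound and then deriving membership in $G$ from it.

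First I would prove the upper bound. Since $X\in\F_0$, we have $y_{0\xi}+H_{0\xi}=1$ almost everywhere. Property $(iii)$ of Definition \ref{def_F} gives $U_{0\xi}^2\leq y_{0\xi}H_{0\xi}$, and combined with AM--GM on the constraint $y_{0\xi}+H_{0\xi}=1$ this yields $H_{0\xi}\leq 1$ and $|U_{0\xi}|\leq \sqrt{y_{0\xi}H_{0\xi}}\leq 1/2$. Plugging these into the explicit differentiated formulas \eqref{lagrangian solutions differentiated} gives
\begin{equation*}
y_\xi(t)+H_\xi(t)=\tfrac14 H_{0\xi} t^2+U_{0\xi}t+(y_{0\xi}+H_{0\xi})\leq \tfrac14 t^2+\tfrac12 t+1\leq \tfrac14 t^2+t+1.
\end{equation*}
For the lower bound I would recycle the Gr\"onwall computation already carried out in the proof of Theorem \ref{solution_operator_lagrangian}: the same chain of inequalities gives $\tfrac{d}{dt}\tfrac{1}{y_\xi+H_\xi}\leq \tfrac12 \tfrac{1}{y_\xi+H_\xi}$, so that $\tfrac{1}{y_\xi(t)+H_\xi(t)}\leq \tfrac{1}{c(0)}e^{t/2}$, and here $c(0)=1$ because $X\in\F_0$ forces $y_{0\xi}+H_{0\xi}=1$.

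Next I would verify the four conditions defining $G$ in Definition \ref{definition_G}. The strict positivity of $(y+H)_\xi(t)$ coming from the lower bound shows $y(t)+H(t)$ is strictly increasing. By Theorem \ref{solution_operator_lagrangian} both $y(t)-\Id$ and $H(t)$ lie in $W^{1,\infty}(\R)$, so $(y(t)+H(t))-\Id\in W^{1,\infty}(\R)$ and is bounded; this together with strict monotonicity and continuity makes $y(t)+H(t)$ a homeomorphism of $\R$. The inverse satisfies $((y+H)^{-1})'(\eta)=1/(y+H)_\xi((y+H)^{-1}(\eta))\leq e^{t/2}$, so it is Lipschitz, and boundedness of $(y+H)^{-1}-\Id$ follows from the identity
\begin{equation*}
(y+H)^{-1}(\eta)-\eta=-\bigl((y+H)-\Id\bigr)\circ(y+H)^{-1}(\eta),
\end{equation*}
whose right-hand side is bounded by $\|(y+H)-\Id\|_\infty$. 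Finally, $(y+H)_\xi-1=(y_\xi-1)+H_\xi$, and each summand is in $L^2(\R)$: $H_\xi(t)=H_{0\xi}\in L^2$ since $H_0\in E_1$, and $y_\xi(t)-1$ is in $L^2$ by the explicit formula \eqref{lagrangian solutions differentiated} combined with $H_{0\xi},U_{0\xi}\in L^2$ and $y_{0\xi}-1\in L^2$.

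The argument is essentially a bookkeeping exercise on top of the explicit formulas and the Gr\"onwall estimate from the previous section; the only place where thought is required is exploiting the normalization $y_{0\xi}+H_{0\xi}=1$ at the right two points, namely to set the Gr\"onwall constant $c(0)=1$ for the lower bound, and to get the sharp constants $H_{0\xi}\leq 1$ and $|U_{0\xi}|\leq 1/2$ that yield the upper bound $\tfrac14 t^2+t+1$. No step should present a genuine analytic obstacle once these two observations are in place.
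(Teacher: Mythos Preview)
Your proposal is correct and follows essentially the same route as the paper: the lower bound is exactly the Gr\"onwall estimate \eqref{y plus H bound} with $c(0)=1$, the upper bound comes from plugging the pointwise bounds on $y_{0\xi},H_{0\xi},U_{0\xi}$ (the paper uses the cruder $|U_{0\xi}|\leq 1$ rather than your sharper $|U_{0\xi}|\leq 1/2$, which is why the stated bound is $\tfrac14 t^2+t+1$ rather than $\tfrac14 t^2+\tfrac12 t+1$), and membership in $G$ is read off from $S_t(X)\in\F$. Your verification of the $G$ conditions is simply a more explicit unpacking of what the paper compresses into one sentence by citing the definition of $\F$.
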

\begin{proof}
Let $y+H = f$, we show that $f\in G$. From the definition of $\F$ we have that $f-\Id\in W^{1,\infty}(\R)$ with $f_{\xi}-1\in L^2(\R)$, with $c\leq f_{\xi}\leq C$ for some positive numbers $c$ and $C$. Thus there exists a Lipschitz continuous inverse $f^{-1}$ such that $f^{-1}-\Id\in W^{1,\infty}(\R)$. Hence $f\in G$. The lower bound on $y_{\xi}(t)+H_{\xi}(t)$ is given by \eqref{y plus H bound}, while the upper bound is a result of the time evolution and $|y_{0\xi}|$, $|H_{0\xi}|$, and $|U_{0\xi}|$ all being less than $1$.
\end{proof}
The metric induced by $\|\:\cdot\:\|_B$ will unfortunately give a positive distance between $X$ and $X\bullet f$, even though they will map to the same element in Eulerian coordinates via the mapping $M$. One could potentially restrict attention to the class $\F_0$ by comparing $X\bullet (y+H)^{-1}$ and $\bar{X}\bullet (\bar{y}+\bar{H})^{-1}$, but it has proven difficult to control the $t$-dependence of $(y+H)^{-1}$. Instead we minimize the distance over all possible relabelings. Following \cite{LipCHline} we define $J:\F\times \F\rightarrow \R$ by
\begin{equation}
J(X,\bar{X}) = \inf\limits_{f,g\in G}\big(\|X \bullet f - \bar{X}\|_B+ \|X-\bar{X}\bullet g\|_B\big).
\end{equation}
If we instead tried $\|X\bullet f-\bar{X}\bullet g\|_B$ we would not be able to separate $r$ from $-r$ as the next example illustrates.
\begin{example}
\label{example}
Let $r = \mathbf{1}_{[0,1]}$, and $y,U,H$ such that $X=(y,U,H,r)\in\F_0$. Then $\bar{X}=(y,U,H,-r)\in\F_0$ as well. Let $0<\epsilon<1$ and define $f^{\epsilon}\in G$ by
\begin{equation}
f^{\epsilon}(\xi) =
\begin{cases}
\xi,  &\xi < 0,\\
\epsilon\xi,  & 0\leq \xi < \frac{1}{\epsilon},\\
\xi+\left(1-\frac{1}{\epsilon}\right), & \frac{1}{\epsilon} \leq \xi.
\end{cases}
\end{equation}
Then for any $t\geq 0$, all components of $S_t(X)$ and $S_t(\bar{X})$ except $r,\bar{r}$ are equal, and thus,
\begin{equation}
\|S_t(X)\bullet f^{\epsilon}-S_t(\bar{X})\bullet f^{\epsilon}\|_B \leq \left(\int_0^{\frac 1\epsilon} 4\epsilon^2\:\d\xi\right)^{1/2} \leq 2\sqrt{\epsilon},
\end{equation}
and hence the infimum over all $f,g\in G$ must equal zero.
\end{example}
In the way we have defined $J$ here we avoid the scenario in Example \ref{example} since we cannot make both $r$ and $\bar{r}$ small at the same time. Now, $J$ will not separate $X$ and $X\bullet f$, and behaves as expected with $t$. However $J$ is not a metric, as the triangle inequality fails. One can salvage a metric by taking the infimum over sums in $J$ over finite sequences in $\F_0$.\footnote{This idea is due to A. Bressan.}
\begin{definition}
Let $X,\bar{X}\in\F_0$, then define $d:\F_0\times\F_0\rightarrow \R$ by
\begin{equation}
d(X,\bar{X}) = \inf\sum\limits_{n=1}^N J(X_{n-1},X_n),
\end{equation}
where the infimum is taken over all finite sequences $\{X_n\}_{n=0}^N$ in $\F_0$ such that the endpoints $X_0$ and $X_N$ satisfy
\begin{align}
X &=X_0,\\
\bar{X} &= X_N.
\end{align}
\end{definition}
It is not at all clear that $d$ only vanishes when $X=\bar{X}$. The purpose of the next lemma is to assert that we have a positive lower bound on $d(X,\bar{X})$ when $X$ differs from $\bar{X}$.
\begin{lemma}[{\cite[Lemma 3.2]{LipCHline}}]
\label{lower_bound_d_lemma}
For any $X,\bar{X}\in\F_0$ we have
\begin{equation}
\label{lower_bound_d}
\|y-\bar{y}\|_{\infty}+\|U-\bar{U}\|_{\infty}+\|H-\bar{H}\|_{\infty} \leq 2d(X,\bar{X}).
\end{equation}
\end{lemma}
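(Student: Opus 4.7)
The plan is to first prove a one-step inequality
\[
\|y-\bar y\|_\infty + \|U-\bar U\|_\infty + \|H-\bar H\|_\infty \leq 2\,J(X,\bar X)
\]
for any $X, \bar X \in \F_0$, and then telescope it along a finite sequence in $\F_0$ to upgrade $J$ to $d$ on the right. The essential structural observation is that in $\F_0$ one has $y + H = \Id = \bar y + \bar H$, so $y-\bar y = -(H-\bar H)$; in particular $\|y-\bar y\|_\infty = \|H-\bar H\|_\infty =: a$. Differentiating $y + H = \Id$ gives $y_\xi + H_\xi = 1$ a.e., so AM--GM together with condition (iii) of Definition~\ref{def_F} yields $U_\xi^2 \leq y_\xi H_\xi \leq \tfrac14$, i.e.\ $U$ and $\bar U$ are $\tfrac12$-Lipschitz, while $y, H, \bar y, \bar H$ are non-decreasing and $1$-Lipschitz.

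The heart of the argument, and the step I expect to be the main obstacle, is proving $a \leq P := \|X\bullet f - \bar X\|_B$ for every $f \in G$, by contradiction. I would pick $\xi_0$ realizing the supremum and, after possibly swapping the roles of $X$ and $\bar X$, assume $y(\xi_0) - \bar y(\xi_0) = a$, so $H(\xi_0) - \bar H(\xi_0) = -a$. Since the $B$-norm dominates the sup norm of each component, $|y(f(\xi_0)) - \bar y(\xi_0)| \leq P$ and $|H(f(\xi_0)) - \bar H(\xi_0)| \leq P$. If $a > P$, the $y$-bound gives $y(f(\xi_0)) \leq \bar y(\xi_0) + P < y(\xi_0)$, so by monotonicity of $y$ one has $f(\xi_0) < \xi_0$; but the $H$-bound gives $H(f(\xi_0)) \geq \bar H(\xi_0) - P > H(\xi_0)$, so by monotonicity of $H$ one has $f(\xi_0) > \xi_0$, a contradiction. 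An identical argument with $g$ in place of $f$ yields $a \leq \|X - \bar X\bullet g\|_B$.

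For the $U$-component, I would exploit the pointwise identity $f(\xi) - \xi = (y(f(\xi)) - \bar y(\xi)) + (H(f(\xi)) - \bar H(\xi))$, which together with the triangle inequality gives $\|f - \Id\|_\infty \leq 2P$. Then $|U(\xi) - \bar U(\xi)| \leq |U(\xi) - U(f(\xi))| + |U(f(\xi)) - \bar U(\xi)| \leq \tfrac12 \cdot 2P + P = 2P$, and the same argument with $g$ gives $\|U - \bar U\|_\infty \leq 2\|X - \bar X\bullet g\|_B$. Summing the three components yields $2a + \|U-\bar U\|_\infty \leq 4\min(P, P') \leq 2(P+P')$, and taking the infimum over $f, g \in G$ delivers the one-step bound.

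To finish, for an arbitrary finite sequence $X = X_0, \ldots, X_N = \bar X$ in $\F_0$ the componentwise $L^\infty$ triangle inequality gives
\[
\|y-\bar y\|_\infty + \|U-\bar U\|_\infty + \|H-\bar H\|_\infty \leq \sum_{n=1}^N \bigl(\|y_{n-1}-y_n\|_\infty + \|U_{n-1}-U_n\|_\infty + \|H_{n-1}-H_n\|_\infty\bigr),
\]
and applying the one-step bound termwise produces an upper bound of $2\sum_{n=1}^N J(X_{n-1},X_n)$; passing to the infimum over all such sequences yields \eqref{lower_bound_d}. Aside from the monotonicity contradiction in the second paragraph, everything is standard bookkeeping with Lipschitz constants and triangle inequalities.
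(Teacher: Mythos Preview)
The paper does not prove this lemma; it is quoted verbatim from \cite[Lemma 3.2]{LipCHline}, so there is no in-paper proof to compare against. Your argument is correct and is essentially the standard one used in that reference. Two minor points are worth tightening. First, the supremum $a=\|y-\bar y\|_\infty$ need not be attained at any $\xi_0$; the contradiction still goes through if instead you pick $\xi_0$ with $|y(\xi_0)-\bar y(\xi_0)|>P$, which exists as soon as $a>P$. Second, the phrase ``after possibly swapping the roles of $X$ and $\bar X$'' is unnecessary and slightly misleading (swapping would also interchange the roles of $f$ and $g$ in $J$); it is cleaner to treat the two cases $y(\xi_0)-\bar y(\xi_0)>P$ and $\bar y(\xi_0)-y(\xi_0)>P$ separately---the monotonicity argument is symmetric. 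With these cosmetic fixes the proof is complete.
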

Lemma \ref{lower_bound_d_lemma} states that if the distance between $X$ and $\bar{X}$ equals zero, then $(y,U,H)$ and $(\bar{y},\bar{U},\bar{H})$ coincide. Still, $r$ and $\bar{r}$ could, in principle, differ. The next lemma shows that this cannot be the case, and consequently $d$ is a metric on $\F_0$.
\begin{lemma}[{\cite[A weaker form of Lemma 6.4]{LipCH}}]
Let $X,\bar{X}$ be in $\F_0$, then if $d(X,\bar{X}) = 0$ we have that $r=\bar{r}$.
\end{lemma}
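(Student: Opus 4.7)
The plan is to prove $r=\bar{r}$ almost everywhere by comparing the integrals $\int_a^b r\,\d\xi$ and $\int_a^b \bar{r}\,\d\xi$ for every bounded interval $[a,b]\subset\R$; equality of these integrals then forces $r=\bar{r}$ a.e. via the Lebesgue differentiation theorem applied to $r-\bar{r}\in L^1_{\mathrm{loc}}(\R)$. First, Lemma \ref{lower_bound_d_lemma} together with $d(X,\bar{X})=0$ yields $y=\bar{y}$, $U=\bar{U}$, and $H=\bar{H}$ in $W^{1,\infty}(\R)$. Because $X,\bar{X}\in\F_0$, differentiating $y+H=\Id$ gives $y_\xi+H_\xi=1$ a.e., hence $y_\xi H_\xi\leq 1/4$; combined with property $(iii)$ of Definition \ref{def_F} this produces the uniform bound $\|r\|_\infty,\|\bar{r}\|_\infty\leq 1/2$, which will be the linchpin of the estimate below.

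Fix $\epsilon>0$ and $[a,b]\subset\R$. Choose a chain $\{X_n\}_{n=0}^N\subset\F_0$ with $X_0=X$, $X_N=\bar{X}$, and $\sum_{n=1}^N J(X_{n-1},X_n)<\epsilon$; for each $n$ select $f_n\in G$ with $e_n:=\|X_{n-1}\bullet f_n-X_n\|_B \leq J(X_{n-1},X_n)+2^{-n}\epsilon$, so that $\sum_n e_n\leq 2\epsilon$. Writing $r_n$ for the $r$-component of $X_n$, apply the change of variables $\eta=f_n(\xi)$ to decompose
\begin{equation*}
\int_a^b r_n\,\d\xi - \int_a^b r_{n-1}\,\d\xi = \int_a^b \bigl(r_n-(r_{n-1}\circ f_n)f_n'\bigr)\,\d\xi + \Bigl(\int_{f_n(a)}^{f_n(b)} r_{n-1}\,\d\eta - \int_a^b r_{n-1}\,\d\eta\Bigr).
\end{equation*}
The first term is bounded by $\sqrt{b-a}\,e_n$ by Cauchy--Schwarz. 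The second, using $\|r_{n-1}\|_\infty\leq 1/2$, is bounded by $\|f_n-\Id\|_\infty$; and since $y_{n-1}+H_{n-1}=\Id=y_n+H_n$ on $\F_0$, we have $f_n-\Id=(y_{n-1}\circ f_n-y_n)+(H_{n-1}\circ f_n-H_n)$, giving $\|f_n-\Id\|_\infty\leq e_n$.

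Summing the per-step bound $(\sqrt{b-a}+1)e_n$ over $n$ yields
\begin{equation*}
\Bigl|\int_a^b r\,\d\xi - \int_a^b \bar{r}\,\d\xi\Bigr| \leq 2(\sqrt{b-a}+1)\epsilon,
\end{equation*}
and letting $\epsilon\to 0$ completes the argument. The main obstacle I anticipate is conceptual: property $(iii)$ combined with Lemma \ref{lower_bound_d_lemma} already forces $r^2=\bar{r}^2$ a.e., but it leaves open sign disagreement on a positive-measure set, and one needs a mechanism connecting integrals of $r$ to the chain functional $J$. The uniform bound $\|r\|_\infty\leq 1/2$ peculiar to $\F_0$ is precisely what permits the $L^\infty$ control of the relabelings $f_n$ via $\|f_n-\Id\|_\infty\leq e_n$ to dominate the integral discrepancy, thereby closing the telescoping estimate.
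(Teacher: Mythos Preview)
Your proof is correct. The paper does not supply its own argument for this lemma; it simply invokes \cite[Lemma 6.4]{LipCH}. Your self-contained proof is therefore a genuine addition: the telescoping of $\int_a^b r_n\,\d\xi$ along a near-optimal chain, with the two terms controlled respectively by the $L^2$ part of $e_n$ (via Cauchy--Schwarz) and by the relabeling shift $\|f_n-\Id\|_\infty$, is exactly the mechanism that the $\F_0$ structure affords. The identity $f_n-\Id=(y_{n-1}\circ f_n-y_n)+(H_{n-1}\circ f_n-H_n)$, which bounds $\|f_n-\Id\|_\infty$ by $e_n$, and the uniform bound $\|r_{n-1}\|_\infty\le\tfrac12$ coming from $y_\xi H_\xi\le\tfrac14$ on $\F_0$, are the two places where membership in $\F_0$ (rather than merely $\F$) is essential, and you have isolated both correctly. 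The argument in \cite{LipCH} for the Camassa--Holm case follows the same blueprint, so your approach is not materially different from what the citation would unpack to; the virtue of your write-up is that it makes the dependence on the $\F_0$ normalization explicit and avoids the reader having to chase the reference.
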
 
We need to estimate $J\big(\Pi S_t(X),\Pi S_t(\bar{X})\big)$ in terms of $J\big(S_t(X),S_t(\bar{X})\big)$.
\begin{lemma}
\label{lemma J relabeling f}
For $X,\bar{X}\in\F_0$, there holds
\begin{equation}
J\big(\Pi S_t(X),\Pi S_t(\bar{X})\big) \leq e^{\frac 12t} J\big(S_t(X),S_t(\bar{X})\big).
\end{equation}
\end{lemma}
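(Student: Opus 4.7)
Fix $t \geq 0$ and set $Y = S_t(X) = (y,U,H,r)$ and $\bar Y = S_t(\bar X) = (\bar y, \bar U, \bar H, \bar r)$, so that $\Pi Y = Y \bullet h^{-1}$ and $\Pi \bar Y = \bar Y \bullet \bar h^{-1}$ with $h = y+H$, $\bar h = \bar y + \bar H$. By Proposition \ref{y plus H in G}, both $h$ and $\bar h$ lie in $G$ and satisfy $h_\xi, \bar h_\xi \geq e^{-t/2}$ almost everywhere.

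The key algebraic manoeuvre is to exploit the fact that $\bullet$ is a right action of $G$ on $\F$ (so that $(Z \bullet a) \bullet b = Z \bullet (a \circ b)$) by reparametrising the infimum that defines $J(\Pi Y, \Pi \bar Y)$. Given arbitrary $f, g \in G$, I would set $\tilde f = h \circ f \circ \bar h^{-1}$ and $\tilde g = \bar h \circ g \circ h^{-1}$; both lie in $G$ because $G$ is a group. Then
\begin{align*}
\Pi Y \bullet \tilde f &= Y \bullet \bigl(h^{-1} \circ h \circ f \circ \bar h^{-1}\bigr) = (Y \bullet f) \bullet \bar h^{-1}, \\
\Pi \bar Y \bullet \tilde g &= (\bar Y \bullet g) \bullet h^{-1},
\end{align*}
and, because the map $Z \mapsto Z \bullet \bar h^{-1}$ is linear on $B$ (it acts componentwise by precomposition, with an extra multiplication by $(\bar h^{-1})'$ on the $r$-component), one obtains the identities
\begin{equation*}
\Pi Y \bullet \tilde f - \Pi \bar Y = (Y \bullet f - \bar Y) \bullet \bar h^{-1}, \quad \Pi Y - \Pi \bar Y \bullet \tilde g = (Y - \bar Y \bullet g) \bullet h^{-1}.
\end{equation*}

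Next I would bound the $B$-norm of this right-action. Sup-norms are preserved under precomposition with a homeomorphism, and for every $Z \in B$ the change of variables $\xi = \bar h(\eta)$, together with $(\bar h^{-1})'(\bar h(\eta)) = 1/\bar h'(\eta)$ and the uniform lower bound $\bar h'(\eta) \geq e^{-t/2}$, produces
\begin{equation*}
\bigl\|(Z_j \circ \bar h^{-1})'\bigr\|_2^2 = \int_\R \frac{|Z_j'(\eta)|^2}{\bar h'(\eta)}\,\d\eta \leq e^{t/2}\|Z_j'\|_2^2, \qquad j=1,2,3,
\end{equation*}
and an analogous estimate $\bigl\|(Z_4 \circ \bar h^{-1})(\bar h^{-1})'\bigr\|_2^2 \leq e^{t/2}\|Z_4\|_2^2$ for the $r$-component. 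Since each summand of $\|Z \bullet \bar h^{-1}\|_B$ is therefore at most a factor $e^{t/4}$ larger than the corresponding summand of $\|Z\|_B$, we conclude $\|Z \bullet \bar h^{-1}\|_B \leq e^{t/2}\|Z\|_B$, and symmetrically with $h^{-1}$ in place of $\bar h^{-1}$.

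Combining the identities with these norm bounds and then taking the infimum over $f, g \in G$ yields $J(\Pi Y, \Pi \bar Y) \leq e^{t/2}J(Y, \bar Y)$, as claimed. The main obstacle is bookkeeping rather than analysis: one must verify that $\tilde f$ and $\tilde g$ really belong to $G$ (which follows from $G$ being closed under composition and inversion together with Proposition \ref{y plus H in G}), and that the componentwise difference of the transformed quadruples reassembles into $(Y \bullet f - \bar Y) \bullet \bar h^{-1}$ in $B$, taking due care that the $r$-component picks up the Jacobian factor $(\bar h^{-1})'$. Once these identities are in hand, the analytic content collapses to the single change-of-variables estimate controlled by the bound $\bar h_\xi \geq e^{-t/2}$ from Proposition \ref{y plus H in G}.
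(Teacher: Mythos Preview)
Your argument is correct and follows essentially the same route as the paper: reparametrise the infimum defining $J$ via the group action, then control the $L^2$ pieces by the change-of-variables estimate governed by the lower bound $h_\xi \geq e^{-t/2}$ from Proposition~\ref{y plus H in G}. The only organisational difference is that the paper first isolates the one-sided estimate $J(X\bullet f,\bar X)\leq \sqrt{\|f_\xi\|_\infty}\,J(X,\bar X)$ and then applies it twice (once for $h^{-1}$, once for $\bar h^{-1}$), whereas you handle both relabelings in a single step by writing $\Pi Y\bullet\tilde f-\Pi\bar Y=(Y\bullet f-\bar Y)\bullet\bar h^{-1}$; your packaging in fact yields the sharper constant $e^{t/4}$, which you then relax to $e^{t/2}$ to match the stated bound.
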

\begin{proof}
The proof consists of two parts. First we show that for $f\in G$ with $\sqrt{\|f_{\xi}\|_{\infty}}\leq C$, with $C\geq 1$, it holds that for any $X,\bar{X}\in \F$
\begin{equation}
J(X\bullet f, \bar{X}) \leq C J(X,\bar{X}).
\end{equation}
Second we show that if $X\in\F_0$ then $(y(t)+H(t))^{-1}$ is in $G$ with $((y(t)+H(t))^{-1})_{\xi}\leq e^{\frac 12 t}$. The $L^{\infty}(\R)$ part of $J$ is invariant with respect to relabeling. It suffices to show that it holds for the $L^2(\R)$ part. To that end let $h$ and $\bar{h}$ belong to $L^2(\R)$, and let $g\in G$. Then by change of variables $f^{-1}(\eta) = \xi$, and denoted $\bar{f} = g\circ f^{-1}$,
\begin{align}
&\int_{\R}|h\circ f f_{\xi} - \bar{h}\circ g g_{\xi}|^2\d\xi\nonumber\\
&= \int_{\R}|h(\eta)f_{\xi}(f^{-1}(\eta)) - \bar{h}\circ \bar{f}(\eta) g_{\xi}(f^{-1}(\eta))|^2\frac{1}{f_{\xi}\left(f^{-1}(\eta)\right)}\:\d\eta \nonumber\\
	&= \int_{\R}|h(\eta) - \bar{h}\circ \bar{f}(\eta) \bar{f}_{\eta}(\eta)|^2f_{\xi}\circ f^{-1}(\eta)\:\d\eta \nonumber\\
	&\leq \|f_{\xi}\|_{\infty}\int_{\R}|h(\eta) - \bar{h}\circ \bar{f}(\eta) \bar{f}_{\eta}(\eta)|^2\d\eta,
\end{align}
where we have used that $\bar{f}_{\eta} = \frac{g_{\xi}\circ f^{-1}}{f_{\xi}\circ f^{-1}}$. We get the inequality
\begin{equation}
\inf_{g\in G}\|h\circ f f_{\xi} - \bar{h}\circ g g_{\xi}\|_2 \leq C\inf_{\bar{f}\in G}\|h-\bar{h}\circ \bar{f} \bar{f}_{\xi}\|_2,
\end{equation}
with $C=\sqrt{\|f_{\xi}\|_{\infty}}$. The part where one relabels $h\circ f f_{\xi}$ is fine since $X\bullet f\bullet g$ equals $X\bullet \tilde{f}$ for some $\tilde{f}\in G$, and infimum over $G$ is taken afterwards. Proposition \ref{y plus H in G} states that $y(t) + H(t) \in G$, and that $e^{-\frac 12 t} \leq y_{\xi}+H_{\xi} \leq (\frac 14 t^2+t+1)$. One can then invert the lower bound on $y_{\xi}+H_{\xi}$ and apply the first result twice to obtain the bound in the lemma.
\end{proof}
We are now ready to prove the Lipschitz theorem on $\F_0$.
\begin{theorem}
\label{lipschitz_theorem}
Let $X,\bar{X}\in\F_0$, then for all $t\geq 0$ it holds that
\begin{equation}
\label{lipschitz_equation}
d\left(\Pi S_t(X),\Pi S_t(\bar{X})\right) \leq e^{\frac 12 t}(\frac 12t^2+t+1)d(X,\bar{X}).
\end{equation}
\end{theorem}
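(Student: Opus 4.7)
The plan is to promote the Lipschitz bound from $J$ on $\F$ to $d$ on $\F_0$ by chaining through an almost-optimal sequence realizing $d(X,\bar X)$. Fix $\epsilon>0$ and pick a finite sequence $X = X_0, X_1,\dots, X_N = \bar X$ in $\F_0$ with
\begin{equation*}
\sum_{n=1}^N J(X_{n-1},X_n) \leq d(X,\bar X)+\epsilon.
\end{equation*}
Applying $\Pi S_t$ componentwise produces a sequence $\Pi S_t(X_0),\ldots,\Pi S_t(X_N)$ in $\F_0$ whose endpoints are $\Pi S_t(X)$ and $\Pi S_t(\bar X)$. By the definition of $d$ it is therefore enough to bound $\sum_{n=1}^N J\bigl(\Pi S_t(X_{n-1}),\Pi S_t(X_n)\bigr)$ from above by $e^{t/2}(\tfrac12 t^2+t+1)\sum_{n=1}^N J(X_{n-1},X_n)$.

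The first step is to show that $S_t$ is Lipschitz for $J$ with constant $\tfrac12 t^2+t+1$; that is, for all $Y,\bar Y\in\F$,
\begin{equation*}
J\bigl(S_t(Y),S_t(\bar Y)\bigr) \leq (\tfrac12 t^2+t+1)\, J(Y,\bar Y).
\end{equation*}
This is where Proposition \ref{Semigroup commutes with relabeling} does the work: for any $f,g\in G$ one has $S_t(Y)\bullet f = S_t(Y\bullet f)$ and $S_t(\bar Y)\bullet g = S_t(\bar Y\bullet g)$, so the $B$-Lipschitz estimate \eqref{lipschitz in F} of Theorem \ref{solution_operator_lagrangian} gives
\begin{equation*}
\|S_t(Y)\bullet f - S_t(\bar Y)\|_B + \|S_t(Y) - S_t(\bar Y)\bullet g\|_B \leq (\tfrac12 t^2+t+1)\bigl(\|Y\bullet f-\bar Y\|_B+\|Y-\bar Y\bullet g\|_B\bigr).
\end{equation*}
Taking the infimum over $f,g\in G$ on both sides yields the claimed estimate on $J$.

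The second step is to introduce the projection onto $\F_0$ using Lemma \ref{lemma J relabeling f}, which already says
\begin{equation*}
J\bigl(\Pi S_t(X_{n-1}),\Pi S_t(X_n)\bigr) \leq e^{t/2}\, J\bigl(S_t(X_{n-1}),S_t(X_n)\bigr).
\end{equation*}
Combining with the previous step and summing over $n$,
\begin{equation*}
\sum_{n=1}^N J\bigl(\Pi S_t(X_{n-1}),\Pi S_t(X_n)\bigr) \leq e^{t/2}(\tfrac12 t^2+t+1) \sum_{n=1}^N J(X_{n-1},X_n) \leq e^{t/2}(\tfrac12 t^2+t+1)\bigl(d(X,\bar X)+\epsilon\bigr).
\end{equation*}
Since $\Pi S_t(X_0)$ through $\Pi S_t(X_N)$ is an admissible sequence in $\F_0$ joining $\Pi S_t(X)$ to $\Pi S_t(\bar X)$, the left-hand side is an upper bound for $d(\Pi S_t(X),\Pi S_t(\bar X))$. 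Letting $\epsilon\to 0$ completes the proof.

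I do not expect a serious obstacle: the two genuinely non-trivial inputs, namely the commutation $S_t(X\bullet f)=S_t(X)\bullet f$ and the relabeling estimate of Lemma \ref{lemma J relabeling f}, are already in hand. The only subtle point to state carefully is that the intermediate sequence $\Pi S_t(X_n)$ lies in $\F_0$ by Proposition \ref{y plus H in G} (so that it is legitimately admissible in the infimum defining $d$), and that passing from an infimum-almost-minimizer of $d$ to one for $d(\Pi S_t(X),\Pi S_t(\bar X))$ introduces only the additive $\epsilon$ that is absorbed in the limit.
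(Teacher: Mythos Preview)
Your proof is correct and follows essentially the same route as the paper's: pick an $\epsilon$-optimal chain in $\F_0$, apply Lemma~\ref{lemma J relabeling f} to pass from $\Pi S_t$ to $S_t$ at the level of $J$, use the commutation $S_t(X\bullet f)=S_t(X)\bullet f$ together with the $B$-Lipschitz estimate \eqref{lipschitz in F} to bound $J(S_t(X_{n-1}),S_t(X_n))$, sum, and let $\epsilon\to 0$. The only cosmetic difference is that you isolate the intermediate inequality $J(S_t(Y),S_t(\bar Y))\leq(\tfrac12 t^2+t+1)J(Y,\bar Y)$ as a separate step, whereas the paper folds this directly into the chain estimate by fixing near-optimal relabelings $f_n,g_n$ from the outset.
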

\begin{proof}
Let $1>\epsilon>0$ and $X,\bar{X}\in\F_0$ be given and choose $\{X_n\}_{n=0}^N$, $\{f_n\}_{n=1}^N$, and $\{g_n\}_{n=0}^{N-1}$ such that $X_0=X, X_N=\bar{X}$ and $d(X,\bar{X})+\epsilon \geq \|X_n\bullet f_n-X_{n-1}\|_B+\|X_n-X_{n-1}\bullet g_{n-1}\|_B$. Then from the definition of $d$ we have
\begin{align}
d\big(\Pi S_t(X),\Pi S_t(\bar{X})\big) &\leq \sum_{n=1}^N J\big(\Pi S_t(X_n),\Pi S_t(X_{n-1})\big) \nonumber\\ 
	&\leq e^{\frac 12t}\sum_{n=1}^N J\big(S_t(X_n), S_t(X_{n-1})\big),
\end{align}
by Lemma \ref{lemma J relabeling f}. From the definition of $J$ we get 
\begin{align}
d\big(\Pi S_t(X),\Pi S_t(\bar{X})\big)	&\leq e^{\frac 12t}\sum_{n=1}^N J\big(S_t(X_n),S_t(X_{n-1})\big)\nonumber\\
	&\leq e^{\frac 12t}(\frac 12t^2+t+1)\sum_{n=1}^N\big(\|X_n\bullet f_n-X_{n-1}\|_B\nonumber\\ &\:\:\quad\qquad\qquad\qquad\qquad +\|X_n-X_{n-1}\bullet g_{n-1}\|_B\big)\nonumber\\
	&\leq e^{\frac 12t}(\frac 12t^2+t+1)\big(d(X,\bar{X})+\epsilon\big).
\end{align}
The inequality holds for each $\epsilon$ in the range $(0,1)$, which implies that
\begin{equation}
d\left(\Pi S_t(X(t)),\Pi S_t(\bar{X}(t))\right) \leq e^{\frac 12t}(\frac 12t^2+t+1)d(X,\bar{X}).
\end{equation}
\end{proof}
Since $\F_0$ is in one to one correspondance with $\D$ the metric $d$ on $\F_0$ induces a metric $d_{\D}$ on $\D$.
\begin{definition}
Define the metric $d_{\D}$ on $\D$ by
\begin{equation}
d_{\D}\big((u,\rho,\mu),(\bar{u},\bar{\rho},\bar{\mu})\big) = d\big(L(u,\rho,\mu),L(\bar{u},\bar{\rho},\bar{\mu})\big),
\end{equation}
for any $(u,\rho,\mu),(\bar{u},\bar{\rho},\bar{\mu})\in\D$.
\end{definition}
\begin{theorem}
The solution operator $T_t$ defined by \eqref{def_T_t} forms a Lipschitz continuous semigroup on $(\D,d_{\D})$ in the sense that for any  $(u,\rho,\mu),(\bar{u},\bar{\rho},\bar{\mu})\in\D$ the inequality
\begin{equation}
d_{\D}\big(T_t(u,\rho,\mu),T_t(\bar{u},\bar{\rho},\bar{\mu})\big) \leq e^{\frac 12t}(\frac 12 t^2+t+1)d_{\D}\big((u,\rho,\mu),(\bar{u},\bar{\rho},\bar{\mu})\big)
\end{equation}
holds.
\end{theorem}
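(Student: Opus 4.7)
The plan is to reduce everything on $\D$ to the already-established Lipschitz estimate on $\F_0$ (Theorem \ref{lipschitz_theorem}) by translating through $L$. Concretely, by definition $d_{\D}\bigl(T_t(u,\rho,\mu),T_t(\bar u,\bar\rho,\bar\mu)\bigr)=d\bigl(L\,T_t(u,\rho,\mu),L\,T_t(\bar u,\bar\rho,\bar\mu)\bigr)$, so the whole task is to identify $L\circ T_t$ with $\Pi\circ S_t\circ L$ as maps from $\D$ into $\F_0$, and then invoke Theorem \ref{lipschitz_theorem}.

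First I would establish the key identity $L\circ T_t = \Pi\circ S_t\circ L$. Fix $(u,\rho,\mu)\in\D$ and set $X=L(u,\rho,\mu)\in\F_0$. Then $T_t(u,\rho,\mu)=M\,S_t(X)$ by definition \eqref{def_T_t}. The element $\Pi S_t(X)=S_t(X)\bullet(y(t)+H(t))^{-1}$ lies in $\F_0$ by Proposition \ref{y plus H in G}, and by the relabeling invariance of $M$ (Proposition \ref{M and f}) we have $M\bigl(\Pi S_t(X)\bigr)=M\bigl(S_t(X)\bigr)=T_t(u,\rho,\mu)$. Applying $L$ to both sides and using $L\circ M=\Id_{\F_0}$ from Lemma \ref{lemma M and L}, we obtain $L\,T_t(u,\rho,\mu)=\Pi S_t(X)=\Pi S_t L(u,\rho,\mu)$, as desired.

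With this identity in hand, the Lipschitz estimate follows immediately:
\begin{align*}
d_{\D}\bigl(T_t(u,\rho,\mu),T_t(\bar u,\bar\rho,\bar\mu)\bigr)
&=d\bigl(\Pi S_tL(u,\rho,\mu),\Pi S_tL(\bar u,\bar\rho,\bar\mu)\bigr)\\
&\leq e^{\frac12 t}(\tfrac12 t^2+t+1)\,d\bigl(L(u,\rho,\mu),L(\bar u,\bar\rho,\bar\mu)\bigr)\\
&= e^{\frac12 t}(\tfrac12 t^2+t+1)\,d_{\D}\bigl((u,\rho,\mu),(\bar u,\bar\rho,\bar\mu)\bigr),
\end{align*}
using Theorem \ref{lipschitz_theorem} in the middle step. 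The semigroup property $T_{t+s}=T_t\circ T_s$ follows from the semigroup property of $S_t$ on $\F$ (Theorem \ref{solution_operator_lagrangian}) combined with the same invariance argument: setting $X=S_sL(u,\rho,\mu)$, the element $LMX=\Pi X$ is a relabeling of $X$, so Proposition \ref{Semigroup commutes with relabeling} and Proposition \ref{M and f} give $M\,S_t\,L\,M\,X=M\,S_t\,X$, whence $T_t\,T_s(u,\rho,\mu)=M\,S_{t+s}\,L(u,\rho,\mu)=T_{t+s}(u,\rho,\mu)$.

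The only step requiring care is the identification $L\,T_t=\Pi\,S_t\,L$; once it is written down, no further estimate is needed. The reason this is the only subtle point is that $S_t$ does not preserve $\F_0$, so one cannot directly apply $L\circ M=\Id_{\F_0}$ to $S_t L(u,\rho,\mu)$. The resolution, which is the content of the argument above, is precisely that the difference between $S_t L(u,\rho,\mu)$ and its image under $L\circ M$ is exactly the relabeling $(y(t)+H(t))^{-1}$, i.e.\ it is $\Pi$.
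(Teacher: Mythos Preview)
Your proof is correct and follows essentially the same approach as the paper's. The paper is terser on the Lipschitz part, writing only that it ``is a corollary of Theorem \ref{theorem_existence} and Theorem \ref{lipschitz_theorem}'', whereas you explicitly derive the identity $L\circ T_t=\Pi\circ S_t\circ L$ that makes this corollary work; the semigroup argument is the same in both.
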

\begin{proof}
The Lipschitz continuity is a corollary of Theorem \ref{theorem_existence} and Theorem \ref{lipschitz_theorem}. We prove the semi group property. From Proposition \ref{M and f} we have $M\circ\Pi = M$, hence
\begin{equation}
T_t = M\circ S_t\circ L = M\circ\Pi\circ S_t\circ L.
\end{equation}
From Theorem \ref{solution_operator_lagrangian} and Proposition \ref{Semigroup commutes with relabeling} we have
\begin{equation*}
T_tT_s = M\circ\Pi\circ S_t\circ L \circ M\circ\Pi \circ S_s\circ L = M\circ\Pi\circ S_t\circ \Pi\circ S_s\circ L = M\circ\Pi\circ S_{t+s}\circ L = T_{t+s}.
\end{equation*}
\end{proof}
\subsection*{Acknowledgements}
The author is grateful for discussions with Katrin Grunert, Helge Holden, and Xavier Raynaud.

\bibliographystyle{plain}
\bibliography{referanser}

\end{document}